\def\eps{\varepsilon }
\def\lam{\lambda }
\def\uu{\mathcal U}
\newcommand{\set}[1]{\left\lbrace #1\right\rbrace}
\providecommand{\abs}[1]{\left\lvert#1\right\rvert}
\newcommand{\remove}[1]{ }
\newcommand{\qtq}[1]{\quad\text{#1}\quad}
\newtheorem{theorem}{Theorem}[section]
\newtheorem{lemma}[theorem]{Lemma}
\newtheorem*{2}{Theorem 1.1}
\theoremstyle{definition}
\theoremstyle{remark}
\newtheorem*{remark}{Remark}
\numberwithin{equation}{section}
\begin{document}
\title[]{Intersections of Siepinski  gasket with its translation }
\author{Yi Cai*
, Wenxia Li}
\address{School of Mathematical Sciences, East China Normal University, Shanghai 200241, People's Republic of China}
\email{52170601013@stu.ecnu.edu.cn}
\address{School of Mathematical Sciences, Shanghai Key Laboratory of PMMP, East China Normal University, Shanghai 200062,
People's Republic of China}
\email{wxli@math.ecnu.edu.cn}
\subjclass[2010]{Primary 11A63; Secondary
37B10, 37B40, 28A78}
\keywords{intersection of of Siepinski  gasket,  Hausdorff dimension,   unique expansion, self-similar sets.}
\date{Version 2019-07-31}
\thanks{*Corresponding author}

\begin{abstract}
Let $E$ be  the Sierpinski  gasket, i.e., the self-similar set generated by the IFS $\left \{f_a(x)=\frac{x+a}{q}: a\in \{(0,0), (0,1), (1,0)\}\right \}$.  In paper, we provide a description of the following set for $2<q<3$
\begin{equation*}
D_q=\{\dim _H(E\cap (E+t)):\;t\in T\},
\end{equation*}
where $T$ is the set of $t=(t_1, t_2)$ with $t\in E-E$ and $t_1, t_2$ have unique $q$-expansions w.r.t $\set{-1,0,1}$.
\end{abstract}
\maketitle
\section{Introduction}

The Siepinski gasket in $\mathbb R^2$ is defined as follows:
\begin{equation}\label{1}
E=\set{\sum_{i=1}^{\infty}\frac{c_i}{q^i}:c_i\in\Omega _1}
\end{equation}
where $\Omega _1:=\set{(0,0),(0,1),(1,0)}$, which is a self-similar set generated by the \emph{iterated function system}
(IFS)
\begin{equation*}
f_a(x)=\frac{x+a}{q}\;\;\textrm{with}\;a\in \Omega _1, x\in \mathbb R^2,
\end{equation*}
i.e., $E=\bigcup _{a\in \Omega_1 }f_a(E)$.

In the past twenty years many works have been devoted to the Siepinski gasket; e.g., see \cite{BMS,DL,NS}. Sidorov \cite{NS} discovered for $q\in(1,2)$ the IFS $\set{f_a:a\in \Omega _1}$ fails the open set condition. Furthermore, for $q\in(1,3/2]$ the Sierpinski gasket coincides with its convex hull. However, for $q\in(3/2,2]$ descriping the sturcture of this set becomes difficult. When $q=2$, the IFS satisfies the open set condition, but does not satisfy the strong separation condition. When $q>2$, the IFS satisfies the strong separation condition (cf. \cite{KF}).

In this paper we investigate the intersection of $E$ with its translation under the assumption  $2<q<3$. In this case, the IFS $\{f_a: a\in \Omega _1\}$ satisfies the strong separation condition hence the Hausdorff dimension of $E$ $\dim _HE=\log 3/\log q$ (cf. \cite{KF}).

One can check that
\begin{equation*}
E\cap (E+t)\ne \emptyset \;\;\textrm{if and only if}\;\; t\in E-E,
\end{equation*}
where the difference set $E-E=\{x-y: x, y\in E\}$. According to \eqref{1} we can rewrite the difference set as
\begin{equation*}
\begin{split}
E-E&=\left \{\sum _{i=1}^\infty \frac{x_i}{q^i}-\sum _{i=1}^\infty \frac{y_i}{q^i}: x_i, y_i\in \Omega _1\right \}\\
\; &=\left \{\sum _{i=1}^\infty \frac{t_i}{q^i}: t_i\in \Omega _2 \right \}
\end{split}
\end{equation*}
where
\begin{align*}
\Omega _2&=\Omega _1-\Omega _1\\
&=\set{(0,0),(0,1),(1,0),(-1,0),(-1,1),(0,-1),(1,-1)}.
\end{align*}
Hence, for any $t\in E-E$, there exist at least
one $(t_i)\in \Omega _2^{\mathbb N}$ such that
$$
t=\sum _{i=1}^\infty \frac{t_i}{q^i}.
$$
The above sequence $(t_i)$ is called a \emph{$q$-expansion} of $t$  with respect to the digit set $\Omega _2$ in base $q$.
 Note that $t$ may have multiple expansions.
On the other hand, for any $t\in E-E$, we have (cf. \cite{WLDX})
\begin{equation}\label{e11}
E\cap(E+t)=\bigcup _{(t_i)}\set{\sum_{i=1}^{\infty}\frac{a_i}{q^i}:a_i\in(\Omega _1\cap(\Omega _1+t_i))},
\end{equation}
where $(t_i)$ takes over all possible $q$-expansions of $t$ with respect to (the digit set) $\Omega _2$.

For each $t_i\in \Omega _2$, we denote $t_i=(t_{i,1}, t_{i,2})$ where  $t_{i,1}, t_{i,2}\in \{-1, 0, 1\}$.
Thus for $t\in E-E$ with an expansion $(t_i)\in \Omega _2^{\mathbb N}$ one has
\begin{equation*}
t=\sum _{i=1}^\infty \frac{t_i}{q^i}=\left (\sum _{i=1}^\infty \frac{t_{i,1}}{q^i},
\sum _{i=1}^\infty \frac{t_{i,2}}{q^i}\right ).
\end{equation*}
So we know that if $(t_i)$ is an expansion of $t=(t_1, t_2)$ with respect to  $\Omega _2$, then
$(t_{i,1}), (t_{i,2})$ are the expansions of $t_1$ and $t_2$ with respect to  $\Omega_3:=\{-1,0,1\}$. We say
two sequences $(t_{i,1}), (t_{i,2})\in \Omega_3^\mathbb N$ are \emph{$\Omega_2$-matched} if $(t_{i,1}, t_{i,2})\in \Omega _2^{\mathbb N}$
for all $i\ge1$.

Note that $2<q<3$, which implies that
\begin{equation*}
\left \{\sum _{i=1}^\infty \frac{s_i}{q^i}: s_i\in \Omega_3\right \}=
\left [-\frac{1}{q-1}, \frac{1}{q-1}\right ].
\end{equation*}
 Thus each $s\in I_q:=\left [-1/(q-1), 1/(q-1)\right ]$ has at least
one $q$-expansion with respect to  $\Omega_3$. This kind of expansions in  \emph{non-integer} base $q$
was first studied by   R\'enyi \cite{R}.
Since then many results on  $q$-expansion  have been discovered. We denote by $\mathcal U_q$ the set
of $s\in I_q$ which has a unique $q$-expansion with respect to  $\Omega_3$. Now let $t=(t_1, t_2)$ and
\begin{equation*}
T=\{t\in E-E: t_1, t_2\in {\mathcal U}_q\}.
\end{equation*}
In this paper, we try to determine the set
\begin{equation*}
D_q:=\set{\dim _H(E\cap (E+t)):\;t\in T}.
\end{equation*}
This is motivated by a recent work
of Baker and Kong
\cite{BD}. They mainly studied the set $\{\dim _H(F\cap (F+s)):\;s\in \mathcal U_q\}$,
where $F=\{\sum _{i=1}^\infty \frac{x_i}{q^i}:
x_i\in \{0,1\}\}$.

Before we state our result, first let us introduce some notation. Let $A:=\set{a_1,a_2,a_3}$ be a set of consecutive integers with $a_1<a_2<a_3$, for a word $w=b_1\cdots b_n\in A^n$ we write
\begin{align*}
&w^{-}:=b_1\cdots b_{n-1}(b_n-1)\qtq{if}b_n>a_1,\\
&w^{+}:=b_1\cdots b_{n-1}(b_n+1)\qtq{if}b_n<a_3,\\
&w^\infty:=www\cdots,
\end{align*}
and $\abs w$ denotes the length of $w$.
Next we recall some results which are useful in this paper.
Komornik et al. \cite{VKL,KLP-2011,K-2012,KL,KL2} discovered
that there exist a smallest base $q_{KL}$ (called the \emph{Komornik--Loreti constant}) in which $1$ has a unique $q$-expansion
with respect to $\{0,1,2\}$,
 and a bases sequence $(q_n)_{n=1}^\infty$. Actually $q_n$ is the base which $1$ has a greedy expansion of the form $w_n0^\infty$. We define recursively a sequence of words, starting with $w_1=2$
\begin{equation*}
w_{n+1}:=(w_n\overline{w_n})^+,\quad n=1,2,\ldots
\end{equation*}
 where the reflection of $w_n$ with respect to $\set{0,1,2}$ is defined as $\overline{w_n}=\overline{b_1\cdots b_n}=(2-b_1)\cdots (2-b_n)$. Then from the definition above we know that $(q_n)$ is strictly increasing and has the limit $q_{KL}$, for details we refer the reader to \cite{VKL}.

Now we state our main result.
\begin{theorem}\label{2}\mbox{}
\begin{enumerate}[\upshape (i)]
\item If $q\in(q_m,q_{m+1}]$ for some $m\ge1$, then
\begin{equation*}
D_q=\set{0,\frac{\log3}{\log q}}\bigcup\set{-\frac{\log3}{\log q}\sum_{i=1}^{n}\left(-\frac{1}{2}\right)^i:1\le n<m}.
\end{equation*}
\item  If $q=q_{KL}$ then
\begin{align*}
 D_{q_{KL}}&=\set{0,\frac{\log3}{\log q_{KL}},\frac{\log3}{3\log q_{KL}}}\\
 &\bigcup\set{-\frac{\log3}{\log q_{KL}}\sum_{i=1}^{n}
 \left(-\frac{1}{2}\right)^i:1\le n<\infty}.
\end{align*}
\item  If $q\in(q_{KL},3)$, $D_q$ contains an interval.
\end{enumerate}
\end{theorem}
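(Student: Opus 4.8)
\emph{A common reduction.} Fix $t\in T$. Since $t_1,t_2\in\mathcal U_q$, the coordinate sequences of any $\Omega_2$-expansion $(t_i)=(t_{i,1},t_{i,2})$ of $t$ must be the unique $\Omega_3$-expansions of $t_1$ and $t_2$; hence $t$ has a unique $\Omega_2$-expansion and the union in \eqref{e11} reduces to a single term. Checking the seven elements of $\Omega_2$ one finds $|\Omega_1\cap(\Omega_1+s)|=3$ if $s=(0,0)$ and $=1$ otherwise, so $E\cap(E+t)$ is a homogeneous Moran subset of $E$ with ratio $1/q$ that branches into three pieces exactly at the levels $i$ with $t_i=(0,0)$. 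Because $q>2$ gives the strong separation condition, the standard homogeneous-Moran-set dimension formula yields
\begin{equation*}
\dim_H\bigl(E\cap(E+t)\bigr)=\frac{\log3}{\log q}\,\underline d(t),\qquad \underline d(t):=\liminf_{n\to\infty}\frac1n\#\{\,i\le n:\ t_i=(0,0)\,\}.
\end{equation*}
Therefore $D_q=\tfrac{\log3}{\log q}\cdot\{\underline d(t):t\in T\}$, and everything reduces to describing the achievable densities $\underline d(t)$. Since $t_i=(0,0)$ iff $t_{i,1}=t_{i,2}=0$, this is the question: which values does $\liminf_n\frac1n\#\{i\le n:u_i=v_i=0\}$ take as $(u,v)$ ranges over pairs in the univoque subshift $\mathbf U_q\subset\Omega_3^{\mathbb N}$ that are $\Omega_2$-matched (never simultaneously $1$, never simultaneously $-1$)?

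\emph{Parts (i) and (ii).} For $q\le q_{KL}$ the subshift $\mathbf U_q$ is governed by the Komornik–Loreti words $w_m$ with $w_{n+1}=(w_n\overline{w_n})^+$, and it is extremely constrained. Running the lexicographic characterisation of univoque sequences against these words, with an induction on $m$ for $q\in(q_m,q_{m+1}]$, one shows that for every $\Omega_2$-matched pair the coincident-zero density must be one of $0$, $1$, or a partial sum $-\sum_{i=1}^{n}(-1/2)^i$ with $1\le n<m$, the ratio $-1/2$ being the arithmetic trace of the doubling step $w_{n+1}=(w_n\overline{w_n})^+$; at $q=q_{KL}$ the limit $1/3$ is also achieved, by the pair built from the Komornik–Loreti sequence. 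The reverse inclusions come from explicit eventually periodic $\Omega_2$-matched pairs realising each value: $(0^\infty,0^\infty)$ gives $1$, a pair that is never simultaneously $0$ (e.g.\ $(1^\infty,0^\infty)$) gives $0$, and $w_m$-block-periodic pairs give the intermediate values. Multiplying by $\log3/\log q$ gives the two identities.

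\emph{Part (iii).} Let $q\in(q_{KL},3)$; it suffices to produce a nondegenerate interval $J\subseteq[0,1]$ with $\underline d(t)=\beta$ attainable for every $\beta\in J$. Restrict to $t=(x,-x)$ with $x\in\mathcal U_q$. Since $I_q$ and $\Omega_3$ are symmetric, $-x\in\mathcal U_q$; if $s$ is the unique expansion of $x$, the pair $(s,-s)$ is automatically $\Omega_2$-matched, so $t\in T$; and $t_i=(0,0)\iff s_i=0$, so $\underline d(t)=\liminf_n\frac1n\#\{i\le n:s_i=0\}$. Thus it is enough to show that $\{\text{lower frequency of }0\text{ in }s:\ s\in\mathbf U_q\}$ contains an interval. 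Since $q>q_{KL}$, $\mathbf U_q$ has positive topological entropy, and from the explicit positive-entropy subsystems of $\mathbf U_q$ supplied by the cited structure theory one can extract two distinct admissible words $W_0,W_1$; after replacing them by $W_0^{|W_1|}$ and $W_1^{|W_0|}$ we may assume they have a common length $N$ and $\#0(W_0)\ne\#0(W_1)$, while every concatenation $W_{c_1}W_{c_2}\cdots$ with $c\in\{0,1\}^{\mathbb N}$ still lies in $\mathbf U_q$. For $\theta\in[0,1]$ take $c^{(\theta)}\in\{0,1\}^{\mathbb N}$ whose frequency of $1$'s exists and equals $\theta$ (Sturmian for irrational $\theta$, eventually periodic otherwise) and put $s^{(\theta)}=W_{c^{(\theta)}_1}W_{c^{(\theta)}_2}\cdots\in\mathbf U_q$; since the blocks have the same length, the frequency of $0$ in $s^{(\theta)}$ exists and equals $(1-\theta)\tfrac{\#0(W_0)}{N}+\theta\tfrac{\#0(W_1)}{N}$, which sweeps out the nondegenerate interval between $\#0(W_0)/N$ and $\#0(W_1)/N$ as $\theta$ runs over $[0,1]$. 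Hence $D_q$ contains $\tfrac{\log3}{\log q}$ times this interval.

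\emph{The main obstacle.} The delicate point is the block extraction in (iii): for \emph{every} $q\in(q_{KL},3)$ one must exhibit two freely concatenable blocks inside $\mathbf U_q$ with different $0$-counts. Both the free concatenability---each concatenation must obey the lexicographic uniqueness inequalities against the quasi-greedy expansion of $1$, whose tail varies with $q$---and the requirement that the frequency of $0$ not be forced constant on the resulting subsystem demand a careful case analysis based on the fine combinatorics of $\mathbf U_q$ in \cite{VKL,KLP-2011,K-2012,KL,KL2} and the methods of \cite{BD}. The parallel difficulty in (i)--(ii) is pinning down exactly which coincident-zero densities the $w_m$-structure allows, which is the source of the partial sums of $(-1/2)^i$.
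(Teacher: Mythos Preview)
Your common reduction is correct and coincides with the paper's Lemma~\ref{17}: for $t\in T$ the $\Omega_2$-expansion is unique, $|\Omega_1\cap(\Omega_1+s)|\in\{1,3\}$, and the dimension is $(\log3/\log q)\cdot\underline d(t)$.

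\medskip
\textbf{Parts (i)--(ii): there is a genuine gap.} What you write is an outline of the \emph{statement}, not a proof. The sentence ``running the lexicographic characterisation\dots with an induction on $m$\dots one shows that the coincident-zero density must be one of\dots'' hides the entire combinatorial core. In the paper this is the content of Lemmas~\ref{12}, \ref{20}, \ref{15}, \ref{16} and \ref{l35}. The structure theory of $\uu_q'$ (Lemmas~\ref{9} and \ref{10}) says that, apart from trivial tails, each coordinate ends in some $(\eps_n\overline{\eps_n})^\infty$ (or, at $q_{KL}$, a Thue--Morse-type tail). The hard part is to show that two such tails can be $\Omega_2$-matched \emph{and} produce infinitely many $(0,0)$'s only in the forced situation $m=n$ with the shift exactly $2^n$, i.e.\ one coordinate is the reflection of the other. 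That is what the paper proves: Lemma~\ref{15} forbids $m\ne n$, Lemma~\ref{12}(ii) forbids any shift other than $2^n$, and Lemma~\ref{20} handles the $(\eps_n\overline{\eps_n})^\infty$ versus $\eps_n\overline{\eps_n}^+\overline{\eps_n}\eps_n^{(-)}$ interactions that arise at $q_{KL}$. These arguments are parity/reflection analyses of the Thue--Morse blocks (properties (P1)--(P4)) and are not an automatic consequence of ``the lexicographic characterisation''. Without this you cannot conclude, for instance, that a pair such as $(\sigma^j(\eps_n\overline{\eps_n})^\infty,(\eps_n\overline{\eps_n})^\infty)$ with $j\ne 2^n$ either fails $\Omega_2$-matching or has coincident-zero density $0$; nor can you rule out a Thue--Morse tail paired with a periodic one at $q_{KL}$. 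Your ``induction on $m$'' does not supply these facts.

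\medskip
\textbf{Part (iii): same idea, same gap you flag.} Your diagonal trick $t=(x,-x)$ is precisely what the paper does implicitly: the words $u_1,u_2$ it exhibits have second coordinate equal to the reflection of the first. The paper resolves the obstacle you identify by invoking the explicit subshift of finite type of Lemma~\ref{5} (from \cite{DLD}) over $\{a_n,b_n,\overline{a_n},\overline{b_n}\}$ with $a_n=0\lambda_1\cdots\lambda_{2^n-1}$ and $b_n=(-1)\lambda_1\cdots\lambda_{2^n-1}$; since $a_n$ and $b_n$ differ only in the first letter, the blocks $u_2=(\overline{a_n}a_n,a_n\overline{a_n})$ and $u_1=(b_n\overline{a_nb_n}a_n,\overline{b_n}a_nb_n\overline{a_n})$ have distinct zero-densities and, by the transition matrix, are freely concatenable. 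Your appeal to positive entropy alone does not give this: positive entropy guarantees many admissible words, but not two that are freely concatenable \emph{and} have different zero-frequencies (and your trick of passing to $W_0^{|W_1|},W_1^{|W_0|}$ equalises lengths but does not separate densities unless the original densities already differ). So the approach is the paper's; what is missing is exactly the explicit block construction that Lemma~\ref{5} provides.
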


The remaining part of this paper is arranged as follows. In next section, we introduce some results needed for our proof. The proof of Theorem
\ref{2} is given in section 3.

\section{Review of some preliminary results}
We first recall the classical Thue--Morse sequence $(\tau_i)_{i=0}^{\infty}$, $\tau_0=0$, $\tau_{2i}=\tau_i$ and $\tau_{2i+1}=1-\tau_i$. This sequence begins with
\begin{equation*}
0110\;1001\;1001\;0110\cdots.
\end{equation*}
Now we introduce the sequence $(\lam_i)_{i=1}^{\infty}\in\Omega_3^\mathbb N$ (see \cite{BD,KL2})
\begin{equation}\label{3}
\lam_i=\tau_i-\tau_{i-1},
\end{equation}
which starts with
\begin{equation*}
10(-1)1\;(-1)010\;(-1)01(-1)\;10(-1)1\cdots.
\end{equation*}
 By \eqref{3} we have
 the following property
\begin{align}\label{13}
\lam_1=1,\quad\lam_{2^{n+1}}=1-\lam_{2^n},\;
\lam_{2^{n}+i}=-\lam_i\qtq{for all $1\le i<2^n$.}
\end{align}
For $n=0,1,2,\cdots $, let
\begin{equation}\label{epsn}
\eps_n=\lam_1\cdots\lam_{2^n}.
\end{equation}
Associating \eqref{13} with \eqref{epsn}, the following simple properties can be verified directly:

(P1) $\eps_0=1$ and $\eps_{n+1}=\eps_n \overline{\eps_n}^+$,
where $\overline{\eps_n}=(-\lam_1)\cdots(-\lam_{2^n})$ is defined as the reflection of $\eps_n$ with respect to $\Omega_3$;

 (P2)  $\eps_{2n+1}$ ends with $0$, $\eps_{2n}$ ends with $1$;

 (P3) $\eps_n$ is the block of length $2^n$. $\eps_n$ contains $0$ for all $n\geq 1$;

 (P4) $\eps_n$ begins with $1$. The odd term of $\eps_n$ is nonzero. There exists $0<2k<2^n$
 such that the $2k$-th term of
 $\eps_n$ is nonzero when $n\geq 3$.

Recall that
\begin{equation*}
\uu_q=\left \{s\in I_q: \textrm{$s$ has unique expansion
w.r.t }\;\Omega_3 \right \}.
\end{equation*}
Let $\uu_q'$ be the set of  corresponding $q$-expansions of elements in $\uu_q$, i.e.,
\begin{equation*}
\uu_q':=\set{(t_i)\in\Omega_3^{\mathbb N}: \sum_{i=1}^{\infty}\frac{t_i}{q^i}\in {\uu_q}}.
\end{equation*}
For two sequences $(t_i)\in \Omega _2^{\mathbb N}$  and $(t_i')\in \Omega_3^{\mathbb N}$, let
\begin{align}\label{tidence}
&d((t_i)):=\liminf_{n\to\infty}\frac{\#\set{1\le i\le n: t_i=(0,0)}}{n},\\
&{d}^*((t_i')):=\liminf_{n\to\infty}\frac{\#\set{1\le i\le n: t_i=0}}{n}\notag.
\end{align}
For two words $(t_i)_{i=1}^n\in \Omega _2^{n}$ and $(t_i')_{i=1}^n\in \Omega_3^{n}$, let
\begin{align}\label{tidencef1}
&{d}((t_i)_{i=1}^n):=\frac{\#\set{1\le i\le n: t_i=(0,0)}}{n},\notag\\
&{d}^*((t_i')_{i=1}^n):=\frac{\#\set{1\le i\le n: t_i=0}}{n}.
\end{align}
The following result can be deduced from \cite[Theorem 3.1]{WLDX} (see \cite{H,HL} for more details) and \eqref{e11} .
\begin{lemma}\label{17}
Let $q\in(2,3)$. Then for any $t\in T$ with $q$-expansion $(t_i)\in \Omega _2^{\mathbb N}$
\begin{equation*}
\dim_H(E\cap(E+t))=\frac{\log3}{\log q}{d}((t_i)),
\end{equation*}
where $d((t_i))$ is given by (\ref{tidence}).
\end{lemma}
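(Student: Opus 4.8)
The plan is to identify $E\cap(E+t)$ with a single homogeneous Moran set, read off its branching numbers from the digits $t_i$, and then invoke the dimension formula of \cite[Theorem 3.1]{WLDX}. First I would note that the $\Omega_2$-expansion of $t$ is unique: since $t=(t_1,t_2)\in T$, both coordinates lie in $\uu_q$, so the coordinate sequences $(t_{i,1})$ and $(t_{i,2})$ are the \emph{only} $q$-expansions of $t_1$ and of $t_2$ with respect to $\Omega_3$. As $t\in E-E$ admits at least one $\Omega_2$-expansion and every $\Omega_2$-expansion of $t$ has those two forced coordinate sequences, the pair $(t_i)=(t_{i,1},t_{i,2})$ is the unique $q$-expansion of $t$ with respect to $\Omega_2$, so the union in \eqref{e11} collapses to a single term:
\begin{equation*}
E\cap(E+t)=\set{\sum_{i=1}^\infty\frac{a_i}{q^i}: a_i\in\Omega_1\cap(\Omega_1+t_i)\text{ for every }i\ge1}.
\end{equation*}

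Next I would run through the seven elements $s\in\Omega_2$ to obtain
\begin{equation*}
\#\bigl(\Omega_1\cap(\Omega_1+s)\bigr)=
\begin{cases}3,& s=(0,0),\\ 1,& s\in\Omega_2\setminus\set{(0,0)},\end{cases}
\end{equation*}
for example $\Omega_1\cap(\Omega_1+(0,1))=\set{(0,1)}$ and $\Omega_1\cap(\Omega_1+(-1,0))=\set{(0,0)}$. Writing $N_i:=\#(\Omega_1\cap(\Omega_1+t_i))\in\set{1,3}$, we have $N_i=3$ exactly when $t_i=(0,0)$, and the set above is the attractor of the Moran-type construction whose level-$i$ generation replaces each surviving cylinder by its $N_i$ children, each a homothety of ratio $1/q$. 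Because $2<q<3$ the IFS $\set{f_a:a\in\Omega_1}$ satisfies the strong separation condition, so the level-$n$ cylinders $f_{a_1}\circ\cdots\circ f_{a_n}(E)$ over distinct words in $\Omega_1^n$ are pairwise disjoint with mutual distance at least $c\,q^{-n}$ for some fixed $c>0$; this is inherited by the subfamily that actually occurs, so $E\cap(E+t)$ is a homogeneous Moran set of ratio $1/q$ with $N_i$ offspring at level $i$, meeting the separation hypotheses of \cite[Theorem 3.1]{WLDX}.

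With this in hand, \cite[Theorem 3.1]{WLDX} (see also \cite{H,HL}) yields
\begin{align*}
\dim_H(E\cap(E+t))&=\liminf_{n\to\infty}\frac{\log(N_1N_2\cdots N_n)}{n\log q}\\
&=\frac{\log3}{\log q}\,\liminf_{n\to\infty}\frac{\#\set{1\le i\le n:t_i=(0,0)}}{n}\\
&=\frac{\log3}{\log q}\,d((t_i)),
\end{align*}
because $\log N_i=\log3$ when $t_i=(0,0)$ and $\log N_i=0$ otherwise, and the last step is the definition \eqref{tidence} of $d$.

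The only non-routine ingredient is the Moran dimension formula used above. Its easy direction is the upper bound, from the obvious covering by level-$n$ cylinders; the hard part will be the matching lower bound $\dim_H(E\cap(E+t))\ge\frac{\log3}{\log q}d((t_i))$, which requires a mass-distribution (Frostman) argument along a subsequence $n_k$ realising the $\liminf$, with the uniform gap $c\,q^{-n}$ from the separation turning the per-cylinder mass into a H\"older bound on small balls; this is precisely what is packaged in \cite[Theorem 3.1]{WLDX}. I would also check the degenerate case $N_i=1$ for all large $i$, where $E\cap(E+t)$ is a finite set of Hausdorff dimension $0$: it is consistent with the formula since then $d((t_i))=0$.
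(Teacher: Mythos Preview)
Your proposal is correct and follows exactly the route the paper indicates: the paper does not spell out a proof but simply states that the lemma ``can be deduced from \cite[Theorem 3.1]{WLDX} \ldots\ and \eqref{e11}'', and you have written out that deduction---uniqueness of the $\Omega_2$-expansion from $t\in T$, the cardinality count $\#(\Omega_1\cap(\Omega_1+s))\in\{1,3\}$, and the Moran dimension formula under strong separation.
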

The technical lemmas given below are necessary.
\begin{lemma}\label{7}  \cite[Lemma 3.1]{BD}
Let $\eps_n$ be given by (\ref{epsn}). Then
\begin{equation*}
d^*((\eps_n))=-\sum_{i=1}^{n}\left(-\frac{1}{2}\right)^i
\end{equation*}
for all nonnegative integer $n$, where $d^*((\eps_n))$ is given by (\ref{tidencef1}).
\end{lemma}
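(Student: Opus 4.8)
The plan is to turn the substitution rule $\eps_{n+1}=\eps_n\overline{\eps_n}^+$ from (P1) into a one‑step recursion for the number of zeros occurring in $\eps_n$ and then to sum it. Write
\[
Z_n:=\#\set{1\le i\le 2^n:\lam_i=0}
\]
for the number of zero entries in the length‑$2^n$ word $\eps_n=\lam_1\cdots\lam_{2^n}$, so that $d^*((\eps_n))=Z_n/2^n$ by \eqref{tidencef1}. Since $\eps_{n+1}$ is the concatenation of $\eps_n$ with $\overline{\eps_n}^+$, we immediately get $Z_{n+1}=Z_n+\#\set{\text{zeros of }\overline{\eps_n}^+}$, and it remains only to count the zeros of $\overline{\eps_n}^+$.

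First I would observe that the reflection $b\mapsto -b$ on $\Omega_3$ fixes $0$, so $\overline{\eps_n}$ has exactly $Z_n$ zeros, and $\overline{\eps_n}^+$ differs from $\overline{\eps_n}$ only in its last digit. By (P2) the last digit of $\eps_n$ is $1$ when $n$ is even and $0$ when $n$ is odd; hence the last digit of $\overline{\eps_n}$ is $-1$ (for $n$ even) or $0$ (for $n$ odd), and after passing to $\overline{\eps_n}^+$ it becomes $0$ (for $n$ even) or $1$ (for $n$ odd). So $\overline{\eps_n}^+$ has $Z_n+1$ zeros when $n$ is even and $Z_n-1$ zeros when $n$ is odd; in both cases this count equals $Z_n+(-1)^n$. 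Therefore
\[
Z_{n+1}=2Z_n+(-1)^n,\qquad\text{equivalently}\qquad d^*((\eps_{n+1}))=d^*((\eps_n))+\frac{(-1)^n}{2^{n+1}}.
\]

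Finally, $\eps_0=1$ gives $d^*((\eps_0))=0$, so summing $d^*((\eps_{k+1}))-d^*((\eps_k))=(-1)^k/2^{k+1}$ over $0\le k\le n-1$ yields $d^*((\eps_n))=\sum_{k=0}^{n-1}(-1)^k/2^{k+1}$; the substitution $i=k+1$ together with $(-1)^{i-1}(1/2)^i=-(-1/2)^i$ rewrites this as $-\sum_{i=1}^n(-1/2)^i$, which is the assertion. (Alternatively, one checks by induction that $-\sum_{i=1}^n(-1/2)^i$ satisfies $a_{n+1}=a_n+(-1)^n/2^{n+1}$ with $a_0=0$.) I do not anticipate a genuine obstacle here: the one delicate point is the count of zeros of $\overline{\eps_n}^+$, where the terminal $\cdot^+$ either creates or destroys exactly one zero according to the parity of $n$, and this is precisely what the parity‑dependent last digit in (P2) records; everything else is bookkeeping.
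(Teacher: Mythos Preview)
Your argument is correct. The recursion $Z_{n+1}=2Z_n+(-1)^n$ is derived cleanly from (P1) and (P2), and the telescoping sum gives the claimed formula; the verification of the parity effect of the terminal $\cdot^+$ is the only subtle step and you handle it properly.

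Note, however, that the paper does not supply its own proof of this lemma: it is quoted verbatim as \cite[Lemma~3.1]{BD} and left without argument. So there is no in-paper proof to compare your approach against. Your write-up is therefore a self-contained replacement for the citation, which is a net gain if one wishes the paper to be independent of \cite{BD} at this point. The approach you take---tracking the zero count through the substitution $\eps_{n+1}=\eps_n\overline{\eps_n}^+$---is the natural one and is almost certainly what \cite{BD} does as well, since the only structural input available is (P1) together with the parity of the last digit.
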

The authors \cite{KK,DLD} gave a crucial characterisation of the set $\uu_q'$.
\begin{lemma}\label{9}
Let $q\in(q_m,q_{m+1}]$ with $m\ge 1$. Then each element of $\uu_q'\setminus\set{(-1)^\infty,1^\infty}$ ends with one of
\begin{equation*}
0^\infty, (\eps_0\overline{\eps_0})^\infty,(\eps_1\overline{\eps_1})^\infty,(\eps_2\overline{\eps_2})^\infty,\cdots,(\eps_{m-1}\overline{\eps_{m-1}})^\infty,
\end{equation*}
and let $\eps_{-1}\overline{\eps_{-1}}:=00$, for each integer $0\le n\le m$ there exists an element of $\uu_q'\setminus\set{(-1)^\infty,1^\infty}$ ends with $(\eps_{n-1}\overline{\eps_{n-1}})^\infty$.
\end{lemma}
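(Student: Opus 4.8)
The plan is to deduce Lemma~\ref{9} from the lexicographic description of $\uu_q'$ obtained in \cite{KK,DLD}. That description attaches to each base $q$ a \emph{reference sequence} $\alpha(q)=(\alpha_i)\in\Omega_3^{\mathbb N}$ and states that $(t_i)\in\uu_q'$ if and only if, for every $k\ge1$,
\begin{equation*}
t_k\ne 1\ \Longrightarrow\ \sigma^{k}\big((t_i)\big)\prec\alpha(q),
\qquad
t_k\ne -1\ \Longrightarrow\ -\sigma^{k}\big((t_i)\big)\prec\alpha(q),
\end{equation*}
together with the evident admissibility of $(t_i)$ itself; here $\sigma$ is the left shift, $-(u_i):=(-u_i)$ the reflection on $\Omega_3$, and $\prec$ the strict lexicographic order. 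The first step is to locate $\alpha(q)$ for $q\in(q_m,q_{m+1}]$, and this is where the hypothesis enters. Since $1$ has $\set{0,1,2}$-greedy expansion $w_n0^\infty$ in base $q_n$, since $w_n$ is obtained from $\eps_{n-1}$ by adding $1$ to every digit (compare $w_1=2$, $\eps_0=1$ with the recursions $w_{n+1}=(w_n\overline{w_n})^+$ and $\eps_{n+1}=\eps_n\overline{\eps_n}^+$ of (P1)), and since subtracting $1$ from each digit turns $\set{0,1,2}$-expansions into $\Omega_3$-expansions, one finds that $\alpha$ passes through the periodic sequences $(\eps_{m-1}\overline{\eps_{m-1}})^\infty$ and $(\eps_{m}\overline{\eps_{m}})^\infty$ exactly at $q=q_m$ and $q=q_{m+1}$ (with the convention $\eps_{-1}\overline{\eps_{-1}}:=00$), so that $(\eps_{m-1}\overline{\eps_{m-1}})^\infty\prec\alpha(q)\preceq(\eps_{m}\overline{\eps_{m}})^\infty$ on $(q_m,q_{m+1}]$.

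For the first assertion, take $(t_i)\in\uu_q'\setminus\set{(-1)^\infty,1^\infty}$. One first rules out a tail $1^\infty$: since $\eps_m$ contains a $0$ by (P3) one has $\alpha(q)\prec 1^\infty$, so if $\sigma^{k}((t_i))=1^\infty$ for some $k$, then at the last index $j$ with $t_j\ne1$ --- which exists because $(t_i)\ne1^\infty$ --- the sequence $\sigma^{j}((t_i))=1^\infty$ violates the first condition; a tail $(-1)^\infty$ is excluded symmetrically. Thus every shift of $(t_i)$ is lexicographically confined strictly between $(\eps_m\overline{\eps_m})^\infty$ and its reflection. Now one argues by induction on $m$, built on the renormalisation $\eps_m=\eps_{m-1}\overline{\eps_{m-1}}^+$: a sequence whose relevant shifts all lie in that band either eventually enters the strictly narrower band belonging to $m-1$ --- so that a tail of it is handled by the inductive hypothesis --- or it ``synchronises'' with the block $\eps_{m-1}\overline{\eps_{m-1}}$ or its reflection, and then the two lexicographic inequalities, together with properties (P1)--(P4) of the $\eps_n$, force it to be eventually periodic with period $\eps_{m-1}\overline{\eps_{m-1}}$. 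The base of the induction is the case where the band is so narrow that the only surviving non-constant tail is $0^\infty$. Unwinding, the tail of $(t_i)$ is one of $0^\infty,(\eps_0\overline{\eps_0})^\infty,\dots,(\eps_{m-1}\overline{\eps_{m-1}})^\infty$.

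For the second assertion, $n=0$ is witnessed by $10^\infty$, which for every $q\in(2,3)$ is the unique $\Omega_3$-expansion of $1/q$, hence lies in $\uu_q'$ and ends with $0^\infty=(\eps_{-1}\overline{\eps_{-1}})^\infty$. For $1\le n\le m$ the witness is the purely periodic sequence $(\eps_{n-1}\overline{\eps_{n-1}})^\infty$. Properties (P1)--(P4) give that $\eps_{n-1}\overline{\eps_{n-1}}$ is primitive, that $(\eps_{n-1}\overline{\eps_{n-1}})^\infty$ is the lexicographically largest of its own shifts, and that the largest shift of its reflection $(\overline{\eps_{n-1}}\,\eps_{n-1})^\infty$ is again $(\eps_{n-1}\overline{\eps_{n-1}})^\infty$; consequently both membership conditions collapse to the single inequality $(\eps_{n-1}\overline{\eps_{n-1}})^\infty\prec\alpha(q)$. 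Since $(\eps_k\overline{\eps_k})^\infty$ is strictly increasing in $k$ (again from (P1)) and $n-1\le m-1$, this follows from $(\eps_{n-1}\overline{\eps_{n-1}})^\infty\preceq(\eps_{m-1}\overline{\eps_{m-1}})^\infty\prec\alpha(q)$ on $(q_m,q_{m+1}]$. The resulting sequence is neither $1^\infty$ nor $(-1)^\infty$ and ends with $(\eps_{n-1}\overline{\eps_{n-1}})^\infty$.

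The step we expect to be the real difficulty is the induction in the first assertion: proving that a non-eventually-constant sequence all of whose shifts are lexicographically trapped between $(\eps_m\overline{\eps_m})^\infty$ and its reflection must be eventually periodic with one of the prescribed periods. This is precisely where the self-similar combinatorics of the Thue--Morse-type words $\eps_n$ ((P1)--(P4)) has to be used in full; the remaining work --- locating $\alpha(q)$, discarding the constant tails, and verifying the periodic witnesses --- is comparatively routine once the characterisation of \cite{KK,DLD} is available.
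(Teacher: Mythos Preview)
The paper does not give its own proof of this lemma: it is stated as a quotation from the literature, introduced by ``The authors \cite{KK,DLD} gave a crucial characterisation of the set $\uu_q'$'', and no argument follows. So there is nothing to compare your argument against; the paper treats Lemma~\ref{9} (and the companion Lemma~\ref{10}) as black boxes imported from \cite{KK,DLD}.

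Your outline is in the right spirit: the lexicographic characterisation of $\uu_q'$ from \cite{KK,DLD}, the identification of the reference sequence on $(q_m,q_{m+1}]$ via the correspondence $w_n\leftrightarrow\eps_{n-1}$, and the explicit witnesses $10^\infty$ and $(\eps_{n-1}\overline{\eps_{n-1}})^\infty$ are all correct ingredients. Where your write-up remains a genuine sketch is exactly where you flag it: the inductive step showing that a sequence trapped in the band $\big(\overline{(\eps_m\overline{\eps_m})^\infty},(\eps_m\overline{\eps_m})^\infty\big)$ must eventually either drop into the narrower band for $m-1$ or synchronise with the period $\eps_{m-1}\overline{\eps_{m-1}}$. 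That dichotomy is precisely the substance of the cited papers and requires a careful block-by-block argument using the recursion $\eps_m=\eps_{m-1}\overline{\eps_{m-1}}^+$; invoking (P1)--(P4) alone, as you do, does not yet establish it. If you intend to include a self-contained proof rather than a citation, this step needs to be written out in full; otherwise, matching the paper and simply citing \cite{KK,DLD} is appropriate.
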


\begin{lemma}\label{10}
Each element of $\uu_{q_{KL}}'\setminus\set{(-1)^\infty,1^\infty}$ is either eventually periodic with one of following period
\begin{equation*}
0^\infty, (\eps_0\overline{\eps_0})^\infty,(\eps_1\overline{\eps_1})^\infty,(\eps_2\overline{\eps_2})^\infty,\cdots,
\end{equation*}
or ends with the sequence of the form
\begin{equation*}
(\eps_{0}\overline{\eps_{0}})^{j_0}(\eps_{0}\overline{\eps_{1}})^{l_0}(\eps_{1}\overline{\eps_{1}})^{j_1}(\eps_{1}\overline{\eps_{2}})^{l_1}\cdots
\end{equation*}
and its reflection, where
\begin{equation*}
l_m\in\set{0,1}\quad\text{and}\quad 0\le j_m<\infty \quad\text{for all $m\ge0$}.
\end{equation*}
Moreover, for each nonnegative integer $n$ there exists an element of $\uu_q'\setminus\set{(-1)^\infty,1^\infty}$ ends with $(\eps_{n-1}\overline{\eps_{n-1}})^\infty$, and for each pair of sequences $(l_i)_{i\ge0},(j_i)_{i\ge0}$ where $l_i\in\set{0,1}$ and $0\le j_i<\infty$, there exists an element of $\uu_q'\setminus\set{(-1)^\infty,1^\infty}$ ends with
\begin{equation*}
(\eps_{0}\overline{\eps_{0}})^{j_0}(\eps_{0}\overline{\eps_{1}})^{l_0}(\eps_{1}\overline{\eps_{1}})^{j_1}(\eps_{1}\overline{\eps_{2}})^{l_1}\cdots.
\end{equation*}
\end{lemma}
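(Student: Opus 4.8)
The plan is to argue entirely at the level of sequences, reducing the statement to the classical lexicographic description of unique expansions, specialised to the base $q_{KL}$. By the theory of univoque sequences in a non-integer base (Komornik--Loreti et al.; see \cite{VKL,KL2} and the references cited for Lemma~\ref{9}), there is a \emph{critical sequence} $\delta_q\in\Omega_3^{\mathbb N}$ such that $(t_i)\in\uu_q'$ precisely when, for every $n\ge0$, one has $\sigma^n\big((t_i)\big)\prec\delta_q$ whenever $t_{n+1}<1$ and $\overline{\sigma^n\big((t_i)\big)}\prec\delta_q$ whenever $t_{n+1}>-1$, where $\sigma$ is the left shift and $\overline{\,\cdot\,}$ the $\Omega_3$-reflection. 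The defining property of the Komornik--Loreti constant, together with \eqref{3} and (P1), identifies $\delta_{q_{KL}}$ with $\eps_\infty:=\lim_{n\to\infty}\eps_n$, i.e.\ with the sequence $(\lam_i)$. Thus Lemma~\ref{10} is the purely combinatorial classification of the $\sigma$-admissible sequences for $\eps_\infty$ together with a realisability statement, and I would prove it in two halves.

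\emph{Classification.} Let $(t_i)\in\uu_{q_{KL}}'\setminus\set{(-1)^\infty,1^\infty}$. After replacing $(t_i)$ by its reflection if necessary and passing to a far tail $(s_i):=\sigma^N\big((t_i)\big)$, I would first dispose of the case where $(s_i)$ is eventually $0^\infty$ (the first listed period, and the $n=0$ case with the convention $\eps_{-1}\overline{\eps_{-1}}:=00$). Otherwise I would run an induction on a ``level'' $n$, keeping track of how far $(s_i)$ agrees with $\eps_\infty$. The engine is the self-similar structure of $\eps_\infty$: from (P1)--(P2) one gets $\eps_{n+1}=\eps_n\overline{\eps_n}^+$, $\overline{\eps_{n+1}}=\overline{\eps_n}\,\eps_n^-$ and $(\eps_n^-)^+=\eps_n$, hence $\eps_\infty=\eps_n\,\overline{\eps_n}^+\,\overline{\eps_n}\,\eps_n\cdots$ on its first $2^{n+2}$ symbols; so any word occurring in an admissible sequence and agreeing sufficiently long with $\eps_\infty$ is a concatenation of the blocks $\eps_n$ and $\overline{\eps_n}$. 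The admissibility inequalities then force a rigid dynamics: after a block $\eps_n\overline{\eps_n}$ the sequence may repeat it (the level stays at $n$), or it must continue with $\eps_n\overline{\eps_{n+1}}=\eps_n\overline{\eps_n}\,\eps_n^-$, in which case the level rises to $n+1$; and the level can never decrease, because a block $\eps_n$ or $\overline{\eps_n}$ re-appearing below the current level produces a shift of $(s_i)$, or of its reflection, that exceeds $\eps_\infty$ lexicographically. Iterating, either the level stabilises at some $n$, giving the eventual period $(\eps_{n-1}\overline{\eps_{n-1}})^\infty$ (which is $0^\infty$ when $n=0$), or it increases without bound, forcing the tail to be $(\eps_0\overline{\eps_0})^{j_0}(\eps_0\overline{\eps_1})^{l_0}(\eps_1\overline{\eps_1})^{j_1}(\eps_1\overline{\eps_2})^{l_1}\cdots$ or its reflection, with $0\le j_m<\infty$ and $l_m\in\set{0,1}$ (the latter because at most one transition block is ever needed to climb one level).

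\emph{Realisability.} Conversely, given a candidate tail, I would produce an element of $\uu_{q_{KL}}'$ ending with it by prefixing a suitable finite $\eps_\infty$-compatible word and then checking the two lexicographic inequalities on every shift. The key observation is that each shift of the concatenation $(\eps_0\overline{\eps_0})^{j_0}(\eps_0\overline{\eps_1})^{l_0}\cdots$, or of $(\eps_{n-1}\overline{\eps_{n-1}})^\infty$, either begins with a digit $<1$ at a position where the continuation is visibly $\prec\eps_\infty$, or it agrees with $\eps_\infty$ along an initial block $\eps_n$, $\eps_n\overline{\eps_n}$ or $\eps_n\overline{\eps_{n+1}}$ and is then cut strictly below --- using repeatedly that $\eps_n\overline{\eps_n}$ coincides with the prefix $\eps_{n+1}$ of $\eps_\infty$ in all but its last symbol, where it is the smaller, and that $\eps_n\overline{\eps_{n+1}}$ coincides with a prefix of $\eps_{n+2}$ up to the $2^{n+1}$-th symbol, where it is again the smaller; the reflected inequalities hold because the whole family of candidate tails is stable under $\Omega_3$-reflection. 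The existence of an element ending with $(\eps_{n-1}\overline{\eps_{n-1}})^\infty$ for each $n$ can alternatively be obtained from Lemma~\ref{9} by letting $q\uparrow q_{KL}$, i.e.\ $m\to\infty$.

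\emph{Main obstacle.} I expect the hard part to be the rigidity assertion inside the classification step --- that the level is monotone nondecreasing and that one transition block suffices per step. This requires a genuinely quantitative control of how far an arbitrary shift of $(s_i)$ may agree with $\eps_\infty$ before being forced strictly below it, that is, a precise picture of the lexicographic neighbourhood of $\eps_\infty$ at exactly the Komornik--Loreti base; properties (P1)--(P4) are the combinatorial input that should make this bookkeeping go through.
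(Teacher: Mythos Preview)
The paper does not give a proof of this lemma. It is listed in Section~2 (``Review of some preliminary results'') immediately after Lemma~\ref{9}, which is explicitly attributed to \cite{KK,DLD}; Lemma~\ref{10} is evidently intended as the $q=q_{KL}$ companion statement drawn from the same body of work (and from the general structure theory in \cite{VKL,KL2}), and the paper simply quotes it without argument. So there is no ``paper's own proof'' to compare against.

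That said, your plan is the right one and is essentially how such results are established in the cited literature. The lexicographic characterisation of $\uu_q'$ via the quasi-greedy/critical sequence, the identification of that sequence at $q_{KL}$ with $(\lam_i)=\eps_\infty$, and the block decomposition into $\eps_n,\overline{\eps_n}$ governed by (P1) are exactly the standard ingredients. Your ``level'' induction --- that once the tail has committed to the $\eps_n\overline{\eps_n}$ granularity it can only stay there or be promoted to level $n+1$ via a single $\eps_n\overline{\eps_{n+1}}$ --- is precisely the mechanism behind the classification in \cite{KK,DLD}; the monotonicity of the level follows from the fact that $\eps_{n+1}^-=\eps_n\overline{\eps_n}$ sits just below the corresponding prefix of $\eps_\infty$, so any drop in level would create a shift violating $\sigma^k(s)\prec\eps_\infty$ or its reflected counterpart. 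Your realisability sketch is also adequate: the candidate tails are already admissible sequences by construction, so no prefix is even needed beyond possibly a leading $0$.

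One small point to tighten: the bound $l_m\in\{0,1\}$ is not because ``at most one transition block is ever needed'' but because two consecutive copies of $\eps_m\overline{\eps_{m+1}}=\eps_m\overline{\eps_m}\eps_m^-$ would produce the pattern $\eps_m^-\eps_m$, and the shift starting there agrees with $\eps_\infty$ on a full $\eps_{m+1}$-block, forcing promotion to level $m+1$ immediately; so a second $l_m$ is absorbed into the $(\eps_{m+1}\overline{\eps_{m+1}})$ count. This is the only place where your justification is slightly off, and it is exactly the ``rigidity'' you flagged as the main obstacle.
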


\section{Proof of Theorem \ref{2}}
It follows from Lemma \ref{17} that the key to the proof of Theorem \ref{2} is to describe the set $T$ in order to obtain $D_q$.
For sequences $(x_i), (y_i)\in \Omega_3^{\mathbb N}$ and $((x_i, y_i))_{i=1}^\infty\in\Omega_2^\mathbb N$, we use $((x_i), (y_i))$ to denote the sequence
$((x_i, y_i))_{i=1}^\infty $. The following lemmas are helpful for characterizing  $T$.
\begin{lemma}\label{12}
Let $n\ge1$ and let $\sigma$ be the left shift in $\Omega_3^{\mathbb N}$.
\begin{enumerate}[\upshape (i)]

\item If $i=2^n$, then
$$
E_{n,m}^i:=(\sigma^i((\eps_n\overline{\eps_n})^\infty),(\eps_n\overline{\eps_n})^\infty)\in \Omega _2^{\mathbb N}
$$
and this sequence contains infinitely many $(0,0)$.

\item Given arbitrarily $0<i<2^{n+1}$ and $i\neq2^n$.  Then  $E_{n,m}^i$ does not contain $(0,0)$ when $i$ is odd.
And when $i$ is even $E_{n,m}^i\notin \Omega _2^{\mathbb N}$.
\end{enumerate}
\end{lemma}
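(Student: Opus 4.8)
# Proof Proposal for Lemma \ref{12}

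The plan is to work entirely with the recursive structure of the blocks $\eps_n$ encoded in properties (P1)--(P4) together with the self-similarity relation \eqref{13} for $(\lam_i)$. Throughout, write $\zeta_n := \eps_n\overline{\eps_n}$, a block of length $2^{n+1}$, so that $\zeta_n^\infty$ is the sequence we are shifting. The crucial arithmetic fact is that $\zeta_n$ is antipalindromic in the following averaged sense: since $\eps_{n+1} = \eps_n\overline{\eps_n}^+$ by (P1), the sequence $\zeta_n^\infty = (\eps_n\overline{\eps_n})^\infty$ differs from $\eps_{n+1}^\infty$ only at the positions $2^{n+1}k$ for $k\ge 1$, where $\zeta_n^\infty$ has a $0$ (the last digit of $\eps_n$, for $n$ even, is $1$ while that of $\eps_{n+1}$ is forced; one tracks this via (P2)). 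I would first record, as a preliminary computation, the explicit values $\lam_i$ at all positions of $\zeta_n$ in terms of $\lam_j$ for $j < 2^n$, using \eqref{13}: for $1\le i < 2^n$ the $i$-th digit of $\zeta_n$ is $\lam_i$, the $2^n$-th is $\lam_{2^n}$, for $2^n < i < 2^{n+1}$ it is $-\lam_{i-2^n}$ (this is the reflection $\overline{\eps_n}$), and the last digit $2^{n+1}$ is $0$ (rather than $-\lam_{2^n}$, this being exactly where the $+$ in $\eps_{n+1}=\eps_n\overline{\eps_n}^+$ would act but $\zeta_n$ does not apply it).

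For part (i), with $i = 2^n$, the sequence $E_{n,m}^{2^n} = (\sigma^{2^n}(\zeta_n^\infty), \zeta_n^\infty)$ has first coordinate equal to $\zeta_n^\infty$ shifted by exactly half its period. By the reflection relation above, at position $j$ the first coordinate is $-(\text{second coordinate at position } j)$ for all positions lying strictly between consecutive "break points," and at the break points (multiples of $2^n$) both coordinates are forced compatible. Concretely, the pair at position $j$ is $(\lam_{j+2^n \bmod 2^{n+1}}, \lam_j)$, and one checks this lies in $\Omega_2 = \Omega_1 - \Omega_1$: the dangerous pairs to exclude are $(1,1)$ and $(-1,-1)$, and the reflection relation $\lam_{2^n+i} = -\lam_i$ guarantees the two coordinates have opposite sign (or one is $0$) away from the break points, while at break points one coordinate is $0$ by (P2)/(P3). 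For the "infinitely many $(0,0)$" claim: a $(0,0)$ occurs at position $j$ iff $\lam_j = 0$ and $\lam_{j + 2^n} = 0$ (indices mod $2^{n+1}$). By (P3), $\eps_n$ contains a $0$, say at position $k$ with $0 < k < 2^n$; then $\lam_k = 0$, and by \eqref{13} $\lam_{2^n + k} = -\lam_k = 0$ as well, so position $k$ (and hence $k + 2^{n+1}\ell$ for every $\ell\ge 0$) gives a $(0,0)$. This yields infinitely many.

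For part (ii), fix $0 < i < 2^{n+1}$ with $i \ne 2^n$. When $i$ is odd: a $(0,0)$ at position $j$ of $E_{n,m}^i$ requires $\lam_j = 0$; but the second coordinate $\zeta_n^\infty$ has $\lam_j = 0$ only at even positions $j$ (by (P4), the odd terms of $\eps_n$ — and hence of $\overline{\eps_n}$ — are nonzero, and the last digit, at the even position $2^{n+1}$, is the only extra $0$), so $j$ is even; then $j + i$ is odd, and the first coordinate $\sigma^i(\zeta_n^\infty)$ at that even position $j$ reads $\lam_{j+i}$ which, $j+i$ being odd, is nonzero — contradiction, so no $(0,0)$ appears. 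When $i$ is even (and $i\ne 2^n$): I must exhibit a position $j$ where the pair $(\lam_{j+i \bmod 2^{n+1}}, \lam_j)$ is one of the forbidden $(1,1)$ or $(-1,-1)$, forcing $E_{n,m}^i \notin \Omega_2^{\mathbb N}$. This is the step I expect to be the main obstacle: unlike the clean reflection symmetry at the half-shift $i = 2^n$, a generic even shift destroys the sign-opposition, and one needs to find, among the positions, a coincidence of equal nonzero digits. The tool for this is (P4): there exists an even position $2k$ (with $0 < 2k < 2^n$, for $n\ge 3$) where $\eps_n$ is nonzero, so $\lam_{2k}\ne 0$; combining this with the reflection relations and a parity/pigeonhole argument on where the shift $i$ lands $2k$ relative to the block structure, I would locate a position witnessing $(\pm1,\pm1)$ with matching signs. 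The small cases $n=1,2$ (where (P4)'s even-position clause is not asserted) I would dispatch by direct inspection of the short blocks $\zeta_1 = 10(-1)1\,(-1)010$ and $\zeta_2$, checking every admissible even $i$ by hand.
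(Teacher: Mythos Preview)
Your treatment of part (i) and of the odd-shift case in part (ii) is correct and aligns with the paper's argument, though for (i) the paper compresses everything into the single observation that $\sigma^{2^n}((\eps_n\overline{\eps_n})^\infty)=(\overline{\eps_n}\eps_n)^\infty$ is the termwise negation of $(\eps_n\overline{\eps_n})^\infty$, so every pair is of the form $(-a,a)$ with $a\in\Omega_3$; this makes your discussion of ``break points'' unnecessary.

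The genuine gap is the even case of part (ii). You correctly flag it as the main obstacle, but the plan you sketch --- locating a single nonzero even-indexed digit via (P4) and then invoking an unspecified ``parity/pigeonhole argument'' --- is not carried out, and there is no reason to expect it to succeed: knowing $\lam_{2k}\ne 0$ at one position does not by itself force the shifted digit at that same position to have the \emph{same} sign. The paper takes a quite different route. Since $i$ is even, odd positions of $(\eps_n\overline{\eps_n})^\infty$ pair with odd positions under the shift, so one may discard all even positions entirely and work with the subsequence $A_k:=(\lam_{2j-1})_{j=1}^{2^k}$ of odd-indexed digits, which by (P4) takes values only in $\{-1,1\}$. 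The key structural fact is the recursion $A_{k+1}=A_k\overline{A_k}$ (immediate from \eqref{13}), which gives $(A_{k+1}\overline{A_{k+1}})^\infty=(A_k\overline{A_k}\,\overline{A_k}A_k)^\infty$. With this self-similarity one proves by induction on $k$ that $D_{k,k}^j:=(\sigma^j((A_k\overline{A_k})^\infty),(A_k\overline{A_k})^\infty)$ contains $(1,1)$ or $(-1,-1)$ for every $0<j<2^{k+1}$ with $j\ne 2^k$; the inductive step is a block-rearrangement argument (the paper tracks it pictorially), and the base cases $k=1,2$ are checked by hand. Pulling back, $E_{n,n}^{2j}$ restricted to odd positions is exactly $D_{n-1,n-1}^j$, which finishes the claim. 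The passage to the $\pm1$-valued odd subsequence is the idea your proposal is missing.
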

\begin{proof}
(i) Note that
\begin{align*}
E_{n,n}^{2^n}&=(\sigma^{2^n}((\eps_n\overline{\eps_n})^\infty),(\eps_n\overline{\eps_n})^\infty)\\
&=((\overline{\eps_n}\eps_n)^\infty),(\eps_n\overline{\eps_n})^\infty)\in\Omega_2^{\mathbb N}.
\end{align*}

By (P3) we know that   $E_{n,n}^{2^n}$ contains infinitely many $(0,0)$.

 (ii) We infer from (P4) that $E_{n,n}^i$ does not contain $(0,0)$ when $i$ is odd. So suppose that $i$ is even. Then
 the odd terms in $\sigma^i((\eps_n\overline{\eps_n})^\infty)$ pair the odd terms in $(\eps_n\overline{\eps_n})^\infty$.
 We focus on the odd terms in $(\eps_n\overline{\eps_n})^\infty$. By doing it, let us remove the even terms in
 $(\eps_n\overline{\eps_n})^\infty$.  Let
\begin{equation*}
A_n:=(a_i)_1^{2^n},\; a_i=\lambda_{2i-1}.
\end{equation*}
By \eqref{13} we have $a_1=1, A_1=1(-1), A_{k+1}=a_1\cdots a_{2^{k+1}}=A_k\overline{A_k}$, where $\overline{A_k}=(-a_1)\cdots (-a_{2^k})$. Let
\begin{equation*}
D_{k,k}^i:=(\sigma^i((A_k\overline{A_k})^\infty),(A_k\overline{A_k})^\infty).
\end{equation*}
We claim that $D_{k,k}^i$ contains $(-1,-1)$ or $(1,1)$ for all $0<i<2^{k+1}$ and $i\neq2^k$. We prove it by induction. First we observe that
\begin{align}\label{19}
\begin{split}
&(A_1\overline{A_1})^\infty=(1(-1)(-1)1)^\infty,\\
&(A_2\overline{A_2})^\infty=(1(-1)(-1)1(-1)11(-1))^\infty,\\
\cdots,\\
&(A_{k+1}\overline{A_{k+1}})^\infty=(A_k\overline{A_k}\overline{A_k}A_k)^\infty,\\
\cdots.
\end{split}
\end{align}
When $k=1$, $D_{1,1}^1$ and $D_{1,1}^3$ contain $(-1,-1)$. When $k=2$, $D_{2,2}^i$ contains $(-1,-1)$
 for all $i\neq4$, which can be verified directly. We assume that $D_{k,k}^i$ contains
  $(-1,-1)$ or $(1,1)$ for all $i\neq2^k$ and some $k=r\ge3$. Then we have to show $D_{r+1, r+1}^i$ contains
   $(-1,-1)$ or $(1,1)$ for all $i\neq2^{r+1}$. Applying \eqref{19} for each $i_1\neq2^r$,
    we see the block of length $(2^r,2^r)$ of $D_{r,r}^{i_1}$ in which  $(-1,-1)$ or $(1,1)$
     locates (see Figure 1) will be rearranged in $D_{r+1, r+1}^{i_2}$
     (we denote by $(\lvert a\rvert,\lvert b\rvert)$ the length of block $A=(a,b)\in\Omega_2^n$),
      we split into four cases (see Figure 2). For $k=r+1$, the cases $i=2^r$ and $2^{r+1}+2^r$,
       one can check directly by Figure 2. Hence $D_{k,k}^i$ contains $(-1,-1)$ or $(1,1)$ for all $0<i<2^{k+1}$
       and $i\neq2^k$. We complete the proof by the fact that there exist two odd integers $u,v$ such that
       $(\lambda_u,\lambda_v)=(-1,-1)$ or $(1,1)$, and $E_{n,n}^i$ contains $(\lambda_u,\lambda_v)$ for all
        $0<i<2^{n+1}$ and $i\neq2^n$.
\begin{figure}[h]\label{figure1}
\centering
\begin{tikzpicture}[scale=5]
\draw[pattern=north west lines] (0,.62) rectangle (.2,.7);
\node [label={[xshift=-1.2cm, yshift=2.8cm]\scriptsize $\overline{A_k}$ denotes by}] {};
\draw(.8,.62) rectangle (1,.7);
\node [label={[xshift=2.5cm, yshift=2.8cm]\scriptsize and $A_k$ denotes by}] {};
\draw (-.4,.3) rectangle (-.2,.38);
\draw [pattern=north west lines] (-.2,.3) rectangle (0,.38);
\draw (0,.3) rectangle (.2,.38);
\draw [pattern=north west lines] (.2,.3) rectangle (.4,.38);
\draw (-.32,.2) rectangle (-.12,.28);
\draw [pattern=north west lines] (-.12,.2) rectangle (.08,.28);
\draw [densely dashed](-.32,.28)--(-.32,.48);
\draw [densely dashed](-.12,.28)--(-.12,.48);
\draw [densely dashed](.08,.28)--(.08,.48);
\draw (-.32,.45)--(-.26,.45);
\draw (-.18,.45)--(-.12,.45);
\draw(-.22,.45)node{\scriptsize A};
\draw (-.12,.45)--(-.06,.45);
\draw (.02,.45)--(.08,.45);
\draw(-.02,.45)node{\scriptsize B};
\draw (-.4,-.04) rectangle (-.2,.04);
\draw [pattern=north west lines] (-.2,-.04) rectangle (0,.04);
\draw (0,-.04) rectangle (.2,.04);
\draw [pattern=north west lines] (.2,-.04) rectangle (.4,.04);

\draw (-.12,-.06) rectangle (.08,-.14);
\draw [pattern=north west lines] (.08,-.06) rectangle (.28,-.14);

\draw [densely dashed](-.12,-.14)--(-.12,.12);
\draw [densely dashed](.08,-.14)--(.08,.12);
\draw [densely dashed](.28,-.14)--(.28,.12);
\draw (-.12,.1)--(-.06,.1);
\draw (.02,.1)--(.08,.1);
\draw(-.02,.1)node{\scriptsize C};
\draw (.08,.1)--(.14,.1);
\draw (.22,.1)--(.28,.1);
\draw(.18,.1)node{\scriptsize D};
\node [label={[xshift=-3.6cm, yshift=1.3cm]\scriptsize I}] {};
\node [label={[xshift=-3.6cm, yshift=-.4cm]\scriptsize II}] {};
\end{tikzpicture}\caption{ I: $0<i_1<2^r$ and II: $2^r<i_1<2^{r+1}$.}
\end{figure}
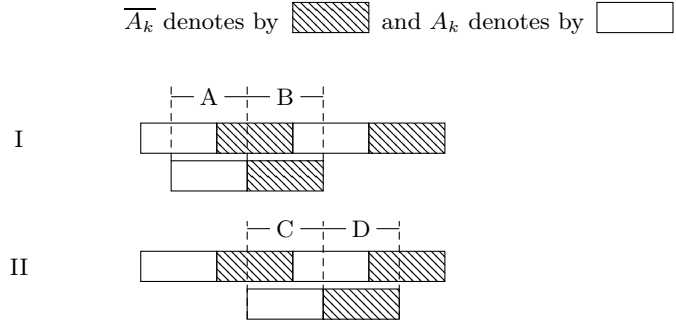

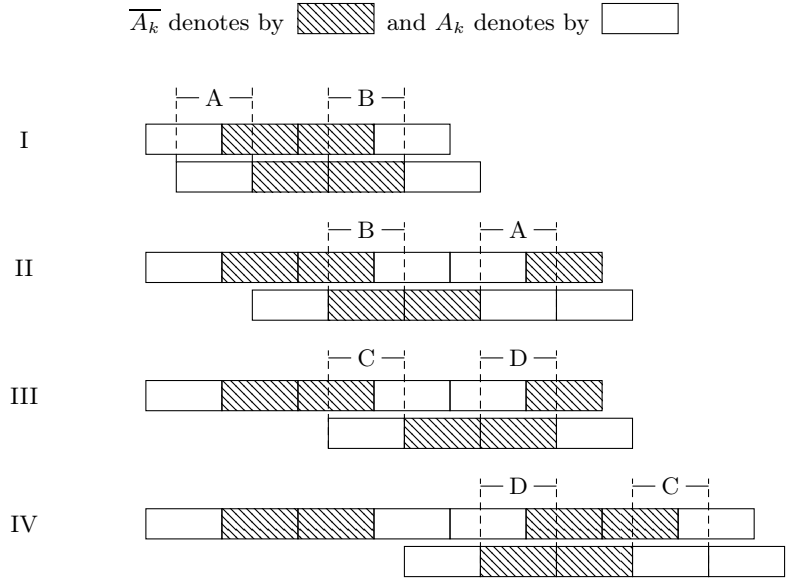
\begin{figure}[h]\label{figure2}
\centering
\begin{tikzpicture}[scale=5]
\draw[pattern=north west lines] (0,.62) rectangle (.2,.7);
\node [label={[xshift=-1.2cm, yshift=2.8cm]\scriptsize $\overline{A_k}$ denotes by}] {};
\draw(.8,.62) rectangle (1,.7);
\node [label={[xshift=2.5cm, yshift=2.8cm]\scriptsize and $A_k$ denotes by}] {};
\draw (-.4,.3) rectangle (-.2,.38);
\draw [pattern=north west lines] (-.2,.3) rectangle (0,.38);
\draw [pattern=north west lines] (0,.3) rectangle (.2,.38);
\draw (.2,.3) rectangle (.4,.38);

\draw (-.32,.2) rectangle (-.12,.28);
\draw [pattern=north west lines] (-.12,.2) rectangle (.08,.28);
\draw [pattern=north west lines](.08,.2) rectangle (.28,.28);
\draw (.28,.2) rectangle (.48,.28);

\draw [densely dashed](-.32,.28)--(-.32,.48);
\draw [densely dashed](-.12,.28)--(-.12,.48);
\draw [densely dashed](.08,.28)--(.08,.48);
\draw [densely dashed](.28,.28)--(.28,.48);
\draw (-.32,.45)--(-.26,.45);
\draw (-.18,.45)--(-.12,.45);
\draw(-.22,.45)node{\scriptsize A};
\draw (.08,.45)--(.14,.45);
\draw (.22,.45)--(.28,.45);
\draw(.18,.45)node{\scriptsize B};

\draw (-.4,-.04) rectangle (-.2,.04);
\draw [pattern=north west lines] (-.2,-.04) rectangle (0,.04);
\draw [pattern=north west lines] (0,-.04) rectangle (.2,.04);
\draw (.2,-.04) rectangle (.4,.04);
\draw (.4,-.04) rectangle (.6,.04);
\draw [pattern=north west lines] (.6,-.04) rectangle (.8,.04);

\draw (-.12,-.06) rectangle (.08,-.14);
\draw [pattern=north west lines] (.08,-.06) rectangle (.28,-.14);
\draw [pattern=north west lines] (.28,-.06) rectangle (.48,-.14);
\draw (.48,-.06) rectangle (.68,-.14);
\draw (.68,-.06) rectangle (.88,-.14);

\draw [densely dashed](.08,-.14)--(.08,.12);
\draw [densely dashed](.28,-.14)--(.28,.12);
\draw [densely dashed](.48,-.14)--(.48,.12);
\draw [densely dashed](.68,-.14)--(.68,.12);

\draw (.08,.1)--(.14,.1);
\draw (.22,.1)--(.28,.1);
\draw(.18,.1)node{\scriptsize B};
\draw (.48,.1)--(.54,.1);
\draw (.62,.1)--(.68,.1);
\draw(.58,.1)node{\scriptsize A};

\draw (-.4,-.38) rectangle (-.2,-.3);
\draw [pattern=north west lines] (-.2,-.38) rectangle (0,-.3);
\draw [pattern=north west lines] (0,-.38) rectangle (.2,-.3);
\draw (.2,-.38) rectangle (.4,-.3);
\draw (.4,-.38) rectangle (.6,-.3);
\draw [pattern=north west lines] (.6,-.38) rectangle (.8,-.3);

\draw (.08,-.48) rectangle (.28,-.4);
\draw [pattern=north west lines] (.28,-.48) rectangle (.48,-.4);
\draw [pattern=north west lines] (.48,-.48) rectangle (.68,-.4);
\draw (.68,-.48) rectangle (.88,-.4);

\draw [densely dashed](.08,-.48)--(.08,-.22);
\draw [densely dashed](.28,-.48)--(.28,-.22);
\draw [densely dashed](.48,-.48)--(.48,-.22);
\draw [densely dashed](.68,-.48)--(.68,-.22);

\draw (.08,-.24)--(.14,-.24);
\draw (.22,-.24)--(.28,-.24);
\draw(.18,-.24)node{\scriptsize C};
\draw (.48,-.24)--(.54,-.24);
\draw (.62,-.24)--(.68,-.24);
\draw(.58,-.24)node{\scriptsize D};

\draw (-.4,-.72) rectangle (-.2,-.64);
\draw [pattern=north west lines] (-.2,-.72) rectangle (0,-.64);
\draw [pattern=north west lines] (0,-.72) rectangle (.2,-.64);
\draw (.2,-.72) rectangle (.4,-.64);
\draw (.4,-.72) rectangle (.6,-.64);
\draw [pattern=north west lines] (.6,-.72) rectangle (.8,-.64);
\draw [pattern=north west lines] (.8,-.72) rectangle (1,-.64);
\draw (1,-.72) rectangle (1.2,-.64);

\draw (.28,-.82) rectangle (.48,-.74);
\draw [pattern=north west lines] (.48,-.82) rectangle (.68,-.74);
\draw [pattern=north west lines] (.68,-.82) rectangle (.88,-.74);
\draw (.88,-.82) rectangle (1.08,-.74);
\draw (1.08,-.82) rectangle (1.28,-.74);

\draw [densely dashed](.48,-.82)--(.48,-.56);
\draw [densely dashed](.68,-.82)--(.68,-.56);
\draw [densely dashed](.88,-.82)--(.88,-.56);
\draw [densely dashed](1.08,-.82)--(1.08,-.56);

\draw (.48,-.58)--(.54,-.58);
\draw (.62,-.58)--(.68,-.58);
\draw(.58,-.58)node{\scriptsize D};
\draw (.88,-.58)--(.94,-.58);
\draw (1.02,-.58)--(1.08,-.58);
\draw(.98,-.58)node{\scriptsize C};

\node [label={[xshift=-3.6cm, yshift=1.3cm]\scriptsize I}] {};
\node [label={[xshift=-3.6cm, yshift=-.4cm]\scriptsize II}] {};
\node [label={[xshift=-3.6cm, yshift=-2.1cm]\scriptsize III}] {};
\node [label={[xshift=-3.6cm, yshift=-3.8cm]\scriptsize IV}] {};
\end{tikzpicture}\caption{ I: $0<i_2<2^r$; II: $2^r<i_2<2^{r+1}$; III: $2^{r+1}<i_2<2^{r+1}+2^r$ and IV: $2^{r+1}+2^r<i_2<2^{r+2}$.}
\end{figure}
\end{proof}

 \begin{lemma}\label{20}
 Let $n\geq 3$. Then for each $0<i<2^{n+1}$, there exists $0<u<2^{n+2}, u\ne 2^{n+1}$ such that the $u$-th term of
 $$
 (\sigma^i((\eps_n\overline{\eps_n})^\infty),(\eps_n\overline{\eps_n}^+\overline{\eps_n}\eps_n^-)^\infty)
 $$
belongs to $\{(1,1), (-1,-1)\}$.
 \end{lemma}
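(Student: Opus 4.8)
The plan is to replace the block bookkeeping used for Lemma~\ref{12} by a counting argument, after first simplifying the second coordinate. By (P1) we have $\eps_{n+1}=\eps_n\overline{\eps_n}^{+}$; since reflection with respect to $\Omega_3=\set{-1,0,1}$ is digitwise negation, incrementing the last digit turns into decrementing it, so $\overline{w^{+}}=\overline{w}^{-}$ whenever both sides are defined, and with $w=\overline{\eps_n}$ this gives $\overline{\eps_{n+1}}=\overline{\eps_n\overline{\eps_n}^{+}}=\overline{\eps_n}\,\eps_n^{-}$. Hence $\eps_n\overline{\eps_n}^{+}\overline{\eps_n}\eps_n^{-}=\eps_{n+1}\overline{\eps_{n+1}}$. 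Write $X:=\sigma^i((\eps_n\overline{\eps_n})^\infty)$ and $Y:=(\eps_{n+1}\overline{\eps_{n+1}})^\infty$, so the sequence in the statement is $(X,Y)$. Then $X$ has period $2^{n+1}$, $Y$ has period $2^{n+2}$, and for $1\le u\le 2^{n+1}$ one has $X_{u+2^{n+1}}=X_u$, $Y_u=\eps_{n+1}[u]=\lam_u$, and $Y_{u+2^{n+1}}=\overline{\eps_{n+1}}[u]=-\lam_u=-Y_u$. This half-period antisymmetry of $Y$ is the structural fact that drives everything.

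Next I count nonzero entries. Let $z_n$ be the number of zeros in $\eps_n$. The odd-indexed entries of $\eps_n$ are nonzero by (P4), whereas $\lam_{2j}=\tau_{2j}-\tau_{2j-1}=\tau_j+\tau_{j-1}-1$ vanishes exactly when $\tau_j\ne\tau_{j-1}$, that is, when $\lam_j\ne0$; counting over $1\le j\le 2^{n-1}$ yields $z_n=2^{n-1}-z_{n-1}$ with $z_0=0$, so $z_{n+1}=2^n-z_n$ and $2^{n}-z_n=2^{n-1}+z_{n-1}$ (and $z_m\ge1$ for $m\ge1$). One period of $(\eps_n\overline{\eps_n})^\infty$ is the word $\eps_n\overline{\eps_n}$, which has $2z_n$ zeros, so any $2^{n+1}$ consecutive terms of $(\eps_n\overline{\eps_n})^\infty$ contain $2^{n+1}-2z_n$ nonzeros; in particular $\#\set{1\le u\le 2^{n+1}:X_u\ne0}=2^{n+1}-2z_n$. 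Likewise $\#\set{1\le u\le 2^{n+1}:Y_u\ne0}=2^{n+1}-z_{n+1}$. Since at most $2z_n+z_{n+1}$ of the indices $u\in[1,2^{n+1}]$ have $X_u=0$ or $Y_u=0$,
\[
\#\set{1\le u\le 2^{n+1}:X_u\ne0\text{ and }Y_u\ne0}\ \ge\ 2^{n+1}-2z_n-z_{n+1}\ =\ 2^{n}-z_n\ =\ 2^{n-1}+z_{n-1}\ \ge\ 3
\]
for $n\ge3$.

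To finish, note that among these (at least three) indices at least two lie in $[1,2^{n+1}-1]$; fix one such $u$, so $X_u,Y_u\in\set{-1,1}$. If $X_u=Y_u$, then the $u$-th term of $(X,Y)$ equals $(X_u,X_u)\in\set{(1,1),(-1,-1)}$ and $0<u<2^{n+1}<2^{n+2}$. If instead $X_u=-Y_u$, then by the antisymmetry above the $(u+2^{n+1})$-th term equals $(X_{u+2^{n+1}},Y_{u+2^{n+1}})=(X_u,-Y_u)=(X_u,X_u)\in\set{(1,1),(-1,-1)}$, with $2^{n+1}<u+2^{n+1}<2^{n+2}$. In either case we have produced $u'$ with $0<u'<2^{n+2}$, $u'\ne 2^{n+1}$, and $u'$-th term in $\set{(1,1),(-1,-1)}$, which is exactly the assertion.

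The substantive point is the identity $\eps_n\overline{\eps_n}^{+}\overline{\eps_n}\eps_n^{-}=\eps_{n+1}\overline{\eps_{n+1}}$ and the ensuing antisymmetry $Y_{u+2^{n+1}}=-Y_u$: once this is in hand, the elementary zero-count replaces the case analysis over $i\bmod 2^{n}$ (and the figures) that was needed for Lemma~\ref{12}; I expect recognizing this reduction to be the only real obstacle, since a direct attack tracking the blocks $\eps_n,\overline{\eps_n}^{+},\overline{\eps_n},\eps_n^{-}$ of $Y$ against the shifted blocks of $X$ would reproduce that lengthy bookkeeping. The one subtlety to watch is the endpoint: one must choose the matching index strictly below $2^{n+1}$ so that, after the possible half-period shift, the bad position remains in $(0,2^{n+2})\setminus\set{2^{n+1}}$ — which is why the count above must still exceed $1$ after discarding $u=2^{n+1}$.
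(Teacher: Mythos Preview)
Your proof is correct and takes a genuinely different route from the paper's argument.

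The paper proceeds by a three-way case split on the shift $i$ (namely $0<i<2^n$, $i=2^n$, $2^n<i<2^{n+1}$), and in each case tracks how the $\eps_n$- and $\overline{\eps_n}$-blocks of the upper sequence align against the four blocks $\eps_n,\overline{\eps_n}^{+},\overline{\eps_n},\eps_n^{-}$ of the lower sequence, reading off a position where the two coordinates coincide (or are reflections of one another) from the accompanying figures; (P4) is invoked to guarantee a nonzero even-indexed entry when the shift is odd. Your argument bypasses this block bookkeeping entirely by first observing the identity $\eps_n\overline{\eps_n}^{+}\overline{\eps_n}\eps_n^{-}=\eps_{n+1}\overline{\eps_{n+1}}$, which turns the lower sequence into $(\eps_{n+1}\overline{\eps_{n+1}})^\infty$ and yields the clean half-period antisymmetry $Y_{u+2^{n+1}}=-Y_u$; together with the $2^{n+1}$-periodicity of $X$, this reduces the problem to producing a single index $u<2^{n+1}$ with $X_uY_u\ne 0$, and your zero-count recursion $z_n=2^{n-1}-z_{n-1}$ delivers such a $u$ uniformly in $i$. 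The payoff is a proof that is both shorter and independent of the value of $i$, and it also immediately gives the subsequent Remark in the paper (with $\eps_n^{-}$ replaced by $\eps_n$), since your index satisfies $u'<2^{n+2}$ and the two words agree on $[1,2^{n+2}-1]$. The paper's block approach, on the other hand, is closer in spirit to the inductive picture-argument of Lemma~\ref{12} and to the later Lemma~\ref{l35}, so it integrates more seamlessly with those proofs. One small remark: you only need a single index $u\in[1,2^{n+1}-1]$ with $X_uY_u\ne0$, so the lower bound $2^{n-1}+z_{n-1}\ge 2$ (valid already for $n\ge2$) would suffice; your ``$\ge 3$'' is true but stronger than necessary.
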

\begin{proof}
For simplicity, denote
$$
\Gamma _i:=(\sigma^i((\eps_n\overline{\eps_n})^\infty),(\eps_n\overline{\eps_n}^+
\overline{\eps_n}\eps_n^-)^\infty).
$$
For a sequence $\alpha =a_1a_2\cdots $ and positive integer $m\le n$, let $\alpha (m,n)=a_m\cdots a_n$.
We divide the proof into three cases.

Case I. $0<i<2^n$. In this case either the segment $\Gamma _i(2^n-i+1, 2\cdot 2^{n}-i)$ or
the segment $\Gamma _i(3\cdot2^{n}-i+1, 4\cdot2^{n}-i)$ contains $(1,1)$ or $(-1,-1)$.

In fact, both these two blocks have the same upper part but the lower part of the former segment is the reflection of  the lower part of the latter segment. More exactly, if we denote
 $$
 \Gamma _i(2^n-i+1, 2\cdot 2^{n}-i)=(c_1\cdots c_{2^n}, d_1\cdots d_{2^n}),
 $$
 then
 $$
 \Gamma _i(3\cdot2^{n}-i+1, 4\cdot2^{n}-i)=(c_1\cdots c_{2^n}, \overline{d_1\cdots d_{2^n}}).
 $$
 On the other hand, when $i$ is even, each odd term $(c_{2s-1}, d_{2s-1})$ satisfies $c_{2s-1}d_{2s-1}\ne 0$.
 When $i$ is odd, there exist $2s$ such that $c_{2s}d_{2s}\ne 0$ by (P4) (see Figure 3).

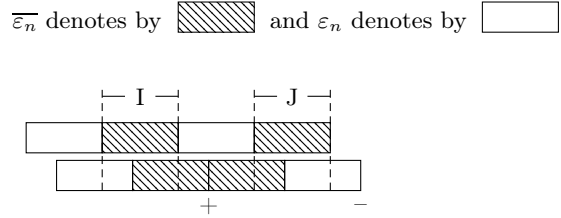
\begin{figure}[h]\label{figure3}
\centering
\begin{tikzpicture}[scale=5]
\draw[pattern=north west lines] (0,.62) rectangle (.2,.7);
\node [label={[xshift=-1.2cm, yshift=2.8cm]\scriptsize $\overline{\eps_n}$ denotes by}] {};
\draw(.8,.62) rectangle (1,.7);
\node [label={[xshift=2.5cm, yshift=2.8cm]\scriptsize and $\eps_n$ denotes by}] {};
\draw (-.4,.3) rectangle (-.2,.38);
\draw [pattern=north west lines] (-.2,.3) rectangle (0,.38);
\draw (0,.3) rectangle (.2,.38);
\draw [pattern=north west lines] (.2,.3) rectangle (.4,.38);

\draw (-.32,.2) rectangle (-.12,.28);
\draw [pattern=north west lines] (-.12,.2) rectangle (.08,.28);
\draw [pattern=north west lines] (.08,.2) rectangle (.28,.28);
\draw (.28,.2) rectangle (.48,.28);

\draw [densely dashed](-.2,.2)--(-.2,.48);
\draw [densely dashed](0,.2)--(0,.48);
\draw [densely dashed](.2,.2)--(.2,.48);
\draw [densely dashed](.4,.2)--(.4,.48);

\draw (-.2,.45)--(-.14,.45);
\draw (-.06,.45)--(0,.45);
\draw(-.1,.45)node{\scriptsize I};
\draw (.2,.45)--(.26,.45);
\draw (.34,.45)--(.4,.45);
\draw(.3,.45)node{\scriptsize J};
\draw(.08,.16)node{\scriptsize +};
\draw(.48,.16)node{\scriptsize --};




\end{tikzpicture}\caption{Case I: $0<i<2^n$.}
\end{figure}

 Case II. $2^n<i<2^{n+1}$. In this case either the segment $\Gamma _i(2\cdot2^{n}-i+1, 3\cdot 2^{n}-i)$ or
the segment $\Gamma _i(4\cdot2^{n}-i+1, 5\cdot2^{n}-i)$ contains $(1,1)$ or $(-1,-1)$.
This can be reduced by the same way as in Case I (see Figure 4).

\begin{figure}[h]\label{figure4}
\centering
\begin{tikzpicture}[scale=5]
\draw[pattern=north west lines] (0,.62) rectangle (.2,.7);
\node [label={[xshift=-1.2cm, yshift=2.8cm]\scriptsize $\overline{\eps_n}$ denotes by}] {};
\draw(.8,.62) rectangle (1,.7);
\node [label={[xshift=2.5cm, yshift=2.8cm]\scriptsize and $\eps_n$ denotes by}] {};
\draw (-.4,.3) rectangle (-.2,.38);
\draw [pattern=north west lines] (-.2,.3) rectangle (0,.38);
\draw (0,.3) rectangle (.2,.38);
\draw [pattern=north west lines] (.2,.3) rectangle (.4,.38);
\draw (.4,.3) rectangle (.6,.38);

\draw (-.12,.2) rectangle (.08,.28);
\draw [pattern=north west lines] (.08,.2) rectangle (.28,.28);
\draw [pattern=north west lines] (.28,.2) rectangle (.48,.28);
\draw (.48,.2) rectangle (.68,.28);

\draw [densely dashed](0,.2)--(0,.48);
\draw [densely dashed](.2,.2)--(.2,.48);
\draw [densely dashed](.4,.2)--(.4,.48);
\draw [densely dashed](.6,.2)--(.6,.48);

\draw (0,.45)--(.06,.45);
\draw (.14,.45)--(.2,.45);
\draw(.1,.45)node{\scriptsize I};
\draw (.4,.45)--(.46,.45);
\draw (.54,.45)--(.6,.45);
\draw(.5,.45)node{\scriptsize J};
\draw(.28,.16)node{\scriptsize +};
\draw(.68,.16)node{\scriptsize --};

\end{tikzpicture}\caption{Case II: $2^n<i<2^{n+1}$.}
\end{figure}
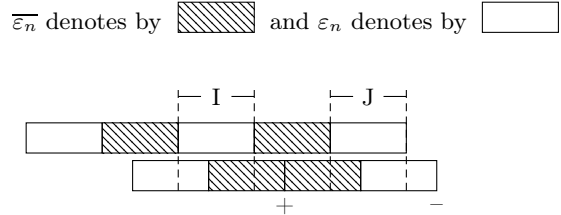
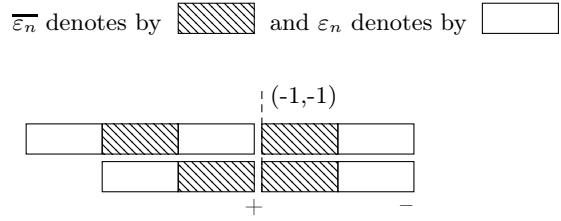
\begin{figure}[h]\label{figure5}
\centering
\begin{tikzpicture}[scale=5]
\draw[pattern=north west lines] (0,.62) rectangle (.2,.7);
\node [label={[xshift=-1.2cm, yshift=2.8cm]\scriptsize $\overline{\eps_n}$ denotes by}] {};
\draw(.8,.62) rectangle (1,.7);
\node [label={[xshift=2.5cm, yshift=2.8cm]\scriptsize and $\eps_n$ denotes by}] {};
\draw (-.4,.3) rectangle (-.2,.38);
\draw [pattern=north west lines] (-.2,.3) rectangle (0,.38);
\draw (0,.3) rectangle (.2,.38);
\draw [pattern=north west lines] (.22,.3) rectangle (.42,.38);
\draw (.42,.3) rectangle (.62,.38);

\draw (-.2,.2) rectangle (0,.28);
\draw [pattern=north west lines] (0,.2) rectangle (.2,.28);
\draw [pattern=north west lines] (.22,.2) rectangle (.42,.28);
\draw (.42,.2) rectangle (.62,.28);

\draw [densely dashed](.22,.2)--(.22,.48);

\draw(.33,.45)node{\scriptsize (-1,-1)};
\draw(.2,.16)node{\scriptsize +};
\draw(.6,.16)node{\scriptsize --};
\end{tikzpicture}\caption{Case III: $i=2^n$.}
\end{figure}

Case III. $i=2^n$. In this case we have
$$
 \Gamma _{2^n}(2^{n+1}+1,  2^{n+1}+2^n)=(\overline{\eps_n}, \overline{\eps_n}),
 $$
 whose first term is $(-1,-1)$ (see Figure 5).
\end{proof}
\begin{remark}
The conclusion of Lemma \ref{20} is still correct if $\eps_n\overline{\eps_n}^+\overline{\eps_n}\eps_n^-$ is
replaced by $\eps_n\overline{\eps_n}^+\overline{\eps_n}\eps_n$.
\end{remark}

Recall that
\begin{equation*}
E_{n,m}^i=(\sigma^i((\eps_n\overline{\eps_n})^\infty),(\eps_m\overline{\eps_m})^\infty).
\end{equation*}
We have the following

\begin{lemma}\label{15}
Let $m\ge n\ge1$. There exists some $0<i<2^{n+1}$ such that the sequence $E_{n,m}^i\in\Omega_2^{\mathbb N}$
 if and only if $m=n$.
\end{lemma}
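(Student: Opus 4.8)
\textbf{Proof plan for Lemma \ref{15}.}

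The direction $m=n\Rightarrow$ existence is already handled: Lemma \ref{12}(i) exhibits $i=2^n$ with $E_{n,n}^{2^n}\in\Omega_2^{\mathbb N}$ (indeed containing infinitely many $(0,0)$), so nothing new is needed there. The substance is the converse: if $m>n$, then for \emph{every} $0<i<2^{n+1}$ the paired sequence $E_{n,m}^i=(\sigma^i((\eps_n\overline{\eps_n})^\infty),(\eps_m\overline{\eps_m})^\infty)$ fails to lie in $\Omega_2^{\mathbb N}$, i.e.\ some coordinate pair $(a,b)$ appearing in it lies outside $\Omega_2$ — concretely, some pair equals $(1,1)$ or $(-1,-1)$ (these are the only elements of $\Omega_3^2\setminus\Omega_2$, since $\Omega_2=\Omega_1-\Omega_1$ omits exactly $(\pm1,\pm1)$). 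The plan is to reduce the general case $m>n$ to the already-proven case $m=n+1$, and to prove that case by combining Lemma \ref{20} with the self-similar block structure of the $\eps$-sequences.

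First I would set up the reduction to $m=n+1$. The key observation is the factorization coming from (P1): $\eps_m\overline{\eps_m}^+$ agrees with a prefix built out of blocks $\eps_{n+1}\overline{\eps_{n+1}}$, and more usefully, by iterating $\eps_{k+1}=\eps_k\overline{\eps_k}^+$ one sees that the infinite word $(\eps_m\overline{\eps_m})^\infty$ has the same ``parity pattern on odd positions'' as $(\eps_{n+1}\overline{\eps_{n+1}})^\infty$ refined further. More precisely: since $\eps_{n+1}=\eps_n\overline{\eps_n}^+$ and $\overline{\eps_n}^+$ differs from $\overline{\eps_n}$ only in the last coordinate (position $2^n$, an even position when $n\ge1$), the word $\eps_m\overline{\eps_m}$ contains $(\eps_n\overline{\eps_n}^+\overline{\eps_n}\eps_n^-)$ or $(\eps_n\overline{\eps_n}^+\overline{\eps_n}\eps_n)$ as a sub-block structure, up to the single-digit defects at even positions — and the Remark after Lemma \ref{20} was inserted precisely so that these two variants are both covered. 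So I would argue: for any fixed shift $i$ with $0<i<2^{n+1}$, locate inside $(\eps_m\overline{\eps_m})^\infty$ a window of the form $\eps_n\overline{\eps_n}^+\overline{\eps_n}\eps_n^-$ (or its $\eps_n$-variant), apply Lemma \ref{20} (with its Remark) to that window paired against the correspondingly shifted copy of $(\eps_n\overline{\eps_n})^\infty$, and conclude that $E_{n,m}^i$ contains a $(1,1)$ or $(-1,-1)$ in that window. One must be slightly careful that Lemma \ref{20} is stated for $\Gamma_i=(\sigma^i((\eps_n\overline{\eps_n})^\infty),(\eps_n\overline{\eps_n}^+\overline{\eps_n}\eps_n^-)^\infty)$ with the period in the \emph{second} coordinate being $4\cdot2^n$; here the second coordinate has period $2^{m+1}$, so I would pass to a sub-window of length $4\cdot 2^n$ inside one period of $(\eps_m\overline{\eps_m})^\infty$ where the relevant four-block pattern occurs, check that the shift of the first coordinate restricted to that window is some $i'$ with $0<i'<2^{n+1}$ (reducing $i$ mod $2^{n+1}$ is harmless since the first coordinate has period $2^{n+1}$), and invoke Lemma \ref{20}. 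The small cases $n=1,2$, where Lemma \ref{20} does not apply, I would dispose of by the direct computation already flagged in the proof of Lemma \ref{12}(ii) (the words $A_k\overline{A_k}$ for small $k$), or by Lemma \ref{12}(ii) itself, which handles even $i$ outright and odd $i$ via (P4); for $m>n$ the mismatch of block boundaries forces a bad even-position pair exactly as in Lemma \ref{12}(ii).

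The main obstacle, and the step I would spend the most care on, is the bookkeeping that a window of the exact shape $\eps_n\overline{\eps_n}^+\overline{\eps_n}\eps_n^-$ (up to the even-position defect) genuinely appears in $(\eps_m\overline{\eps_m})^\infty$ aligned so that Lemma \ref{20} is literally applicable — that is, matching up the window boundaries in the second coordinate with a shift $i'$ of the first coordinate in the permitted range. Because $\eps_{n+1}=\eps_n\overline{\eps_n}^+$ and $\overline{\eps_{n+1}}=\overline{\eps_n}\eps_n^-$, one period $\eps_{n+1}\overline{\eps_{n+1}}$ of length $2^{n+2}$ \emph{is exactly} $\eps_n\overline{\eps_n}^+\overline{\eps_n}\eps_n^-$; and for general $m>n+1$ the word $\eps_m\overline{\eps_m}$ is a concatenation of translates of $\eps_{n+1}$ and $\overline{\eps_{n+1}}$ (with $\pm$ defects at the block ends, all at even positions by (P2), since $\eps_k$ ends in $0$ or $1$ at position $2^k$), so such a window is always available within two consecutive $\eps_{n+1}$-type blocks. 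Once that structural claim is nailed down, the rest is: reduce $i$ modulo $2^{n+1}$, feed it to Lemma \ref{20}, read off a $(1,1)$ or $(-1,-1)$, and conclude $E_{n,m}^i\notin\Omega_2^{\mathbb N}$; since $i$ was arbitrary this proves the contrapositive, completing the lemma.
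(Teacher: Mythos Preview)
Your proposal is correct and follows essentially the same approach as the paper: sufficiency via Lemma \ref{12}(i), the small cases $n=1,2$ by direct verification, and the general necessity for $n\ge 3$ via Lemma \ref{20} together with its Remark. The paper streamlines your window-hunting and reduction-to-$m=n+1$ step by simply observing that for every $m>n$ the sequence $(\eps_m\overline{\eps_m})^\infty$ \emph{begins} with either $\eps_n\overline{\eps_n}^+\overline{\eps_n}\eps_n^-$ (when $m=n+1$) or $\eps_n\overline{\eps_n}^+\overline{\eps_n}\eps_n$ (when $m\ge n+2$, since $\eps_m$ then starts with $\eps_{n+2}=\eps_{n+1}\overline{\eps_{n+1}}^+=\eps_n\overline{\eps_n}^+\overline{\eps_n}\eps_n$), so Lemma \ref{20} applies directly to the first $2^{n+2}$ positions with no alignment bookkeeping required.
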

\begin{proof}
Let $m=n$.  The sufficiency follows from Lemma \ref{12}(i).

Now suppose that $n<m$. It suffices to show that the sequence $E_{n,m}^i$  contains either $(-1,-1)$ or $(1,1)$ for all $0<i<2^{n+1}$. This can be directly verified for $n=1,2$. Let $n\geq 3$.  Note that when $n<m$, the
sequence $(\eps_m\overline{\eps_m})^\infty$ begins with
\begin{equation*}
\eps_n\overline{\eps_n}^+\overline{\eps_n}\eps_n^-\qtq{or}\eps_n\overline{\eps_n}^+\overline{\eps_n}\eps_n.
\end{equation*}
From Lemma \ref{20} and the consequent remark it follows that the sequence $E_{n,m}^i$  contains either $(-1,-1)$ or $(1,1)$ for all $0<i<2^{n+1}$.
\end{proof}

Applying Lemmas \ref{9}, \ref{12} and \ref{15} we have the following results.
\begin{lemma}\label{16}
Let $q\in(q_m,q_{m+1}]$, $m\ge1$ and $t\in T$ with $q$-expansion $(t_i)=((t_{i,1}), (t_{i,2}))$.
Suppose that  $(t_{i,1}), (t_{i,2})\notin \set{(-1)^\infty, 1^\infty}$. If $(t_i)$ contains infinitely many $(0,0)$, then
  either one of $(t_{i,1})$ and $(t_{i,2})$ ends with $0^\infty$  or $(t_i)$ ends with
 $((\eps_n\overline{\eps_n})^\infty,(\overline{\eps_n}\eps_n)^\infty)$ for some $1\le n<m$.
\end{lemma}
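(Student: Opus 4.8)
The plan is to combine the structural description of unique expansions in Lemma~\ref{9} with the combinatorial obstructions in Lemmas~\ref{12} and~\ref{15}. Since $t\in T$, both $(t_{i,1})$ and $(t_{i,2})$ lie in $\uu_q'\setminus\set{(-1)^\infty,1^\infty}$, so by Lemma~\ref{9} each of them ends with one of $0^\infty,(\eps_0\overline{\eps_0})^\infty,\dots,(\eps_{m-1}\overline{\eps_{m-1}})^\infty$. If one of $(t_{i,1}),(t_{i,2})$ ends with $0^\infty$ the first alternative holds and we are done, so assume neither does and write: $(t_{i,1})$ ends with $(\eps_{n_1}\overline{\eps_{n_1}})^\infty$ and $(t_{i,2})$ ends with $(\eps_{n_2}\overline{\eps_{n_2}})^\infty$ with $0\le n_1,n_2\le m-1$. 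Since $\eps_0=1$, the word $\eps_0\overline{\eps_0}=1(-1)$ contains no $0$, so if $n_1=0$ or $n_2=0$ then $(t_i)$ would contain only finitely many $(0,0)$, contrary to hypothesis; hence $1\le n_1,n_2\le m-1$.

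Next I would put the tail of $(t_i)$ into a canonical form. Choose a position $I$ past which both coordinate tails have already become periodic. Then the tail of $(t_i)$ from position $I$ equals $(\sigma^{a}((\eps_{n_1}\overline{\eps_{n_1}})^\infty),\sigma^{b}((\eps_{n_2}\overline{\eps_{n_2}})^\infty))$ for suitable $a,b\ge0$. Shifting by a further amount $c$ with $c\equiv -b\pmod{2^{n_2+1}}$ normalises the second coordinate to $(\eps_{n_2}\overline{\eps_{n_2}})^\infty$ (using that this sequence has period $2^{n_2+1}$), and reducing the remaining shift of the first coordinate modulo $2^{n_1+1}$ shows that $(t_i)$ ends with $E_{n_1,n_2}^{j}$ for some $0\le j<2^{n_1+1}$. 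Since any shift of a sequence in $\Omega_2^{\mathbb N}$ is again in $\Omega_2^{\mathbb N}$ and ``containing infinitely many $(0,0)$'' is unaffected by shifting, the sequence $E_{n_1,n_2}^{j}$ must itself lie in $\Omega_2^{\mathbb N}$ and contain infinitely many $(0,0)$.

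The two lemmas now finish the argument. The value $j=0$ is excluded because $E_{n_1,n_2}^{0}$ begins with $(1,1)\notin\Omega_2$, as $\eps_{n_1}$ and $\eps_{n_2}$ both begin with $1$ by (P4). If $n_1\ne n_2$, then since $\Omega_2$ is invariant under interchanging the two coordinates we may assume $n_1<n_2$, and Lemma~\ref{15} applied with $n=n_1\ge1$ and $m=n_2>n_1$ gives $E_{n_1,n_2}^{j}\notin\Omega_2^{\mathbb N}$ for every $0<j<2^{n_1+1}$, a contradiction. Hence $n_1=n_2=:n$ with $1\le n<m$, and the tail in question is $E_{n,n}^{j}$. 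By Lemma~\ref{12}(ii), for $0<j<2^{n+1}$ with $j\ne2^n$ the sequence $E_{n,n}^{j}$ either is not in $\Omega_2^{\mathbb N}$ (when $j$ is even) or contains no $(0,0)$ (when $j$ is odd); both possibilities are excluded. Therefore $j=2^n$, so $(t_i)$ ends with $E_{n,n}^{2^n}=((\overline{\eps_n}\eps_n)^\infty,(\eps_n\overline{\eps_n})^\infty)$; applying one more shift by $2^n$ and using periodicity, this is equivalent to saying that $(t_i)$ ends with $((\eps_n\overline{\eps_n})^\infty,(\overline{\eps_n}\eps_n)^\infty)$, which is the second alternative.

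The step demanding the most care is the normalisation in the second paragraph: one must check that after the shifts the combined sequence is genuinely of the form $E_{n_1,n_2}^{j}$ with the second coordinate exactly $(\eps_{n_2}\overline{\eps_{n_2}})^\infty$ and with the shift index in the range $0\le j<2^{n_1+1}$ that Lemmas~\ref{12} and~\ref{15} require; everything else is bookkeeping. Note also that when $m=1$ the second alternative is vacuous and the argument collapses to the statement that one of the coordinates must end with $0^\infty$.
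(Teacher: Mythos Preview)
Your proof is correct and follows essentially the same route as the paper's: apply Lemma~\ref{9} to both coordinate sequences, exclude the tail $(\eps_0\overline{\eps_0})^\infty$ via the infinitely-many-$(0,0)$ hypothesis, normalise the tail to the form $E_{n_1,n_2}^{j}$, use Lemma~\ref{15} to force $n_1=n_2$, and then Lemma~\ref{12} to force $j=2^n$. Your write-up is in fact more careful than the paper's in making the normalisation step explicit, in handling the boundary case $j=0$, and in invoking the coordinate-swap symmetry of $\Omega_2$ to reduce to $n_1\le n_2$ before applying Lemma~\ref{15}.
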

\begin{remark}
We omit the cases $(t_{i,1})$ or $(t_{i,2})$ can end with $(\eps_0\overline{\eps_0})^\infty=(1(-1))^\infty$ under the assumption of Lemmas \ref{16} and \ref{l35}, otherwise $(t_i)$ will contain at most finitely many $(0,0)$.
\end{remark}
\begin{proof}
Suppose that  $(t_{i,1}), (t_{i,2})\notin \set{(-1)^\infty, 1^\infty}$. It is important that  $(t_i)$ contains neither $(-1,-1)$ nor $(1,1)$.

From Lemma \ref{9} it follows that $(t_{i,1}),(t_{i,2})$
end with one of
\begin{equation*}
0^\infty, (\eps_1\overline{\eps_1})^\infty,(\eps_2\overline{\eps_2})^\infty,\cdots,(\eps_{m-1}\overline{\eps_{m-1}})^\infty.
\end{equation*}
Assume that both $(t_{i,1})$ and $(t_{i,2})$ don't end with $0^\infty $. Let $(t_{i,1})$ ends with
$(\eps_\alpha \overline{\eps_\alpha })^\infty$, and  $(t_{i,2})$ ends with
$(\eps_\beta \overline{\eps_\beta })^\infty$. Lemma \ref{15} implies that $t = (t_i)$ ends with $(\sigma^i(\eps_{\alpha}\overline{\eps_{\alpha}})^\infty, (\eps_{\alpha}\overline{\eps_{\alpha}})^\infty)$. Since $(t_i)$
contains infinitely many $(0,0)$, then Lemma \ref{12} implies that $i=2^\alpha$, so
\begin{equation*}
(\sigma^i(\eps_{\alpha}\overline{\eps_{\alpha}})^\infty,(\eps_{\alpha}\overline{\eps_{\alpha}})^\infty)=((\overline{\eps_{\alpha}}\eps_{\alpha})^\infty,(\eps_{\alpha}\overline{\eps_{\alpha}})^\infty).
\end{equation*}
Note that ending with a tail of $((\overline{\eps_{\alpha}}\eps_{\alpha})^\infty,(\eps_{\alpha}\overline{\eps_{\alpha}})^\infty)$
is equivalent to ending with a tail of $((\eps_{\alpha}\overline{\eps_{\alpha}})^\infty,(\overline{\eps_{\alpha}}\eps_{\alpha})^\infty)$.
\end{proof}
\begin{lemma}\label{l35}
Let $q=q_{KL}$, $t\in T$ with $q$-expansion $(t_i)=((t_{i,1}), (t_{i,2}))$.
Suppose that  $(t_{i,1}), (t_{i,2})\notin \set{(-1)^\infty, 1^\infty}$. If $(t_i)$ contains infinitely many $(0,0)$, then
  either one of $(t_{i,1})$ and $(t_{i,2})$ ends with $0^\infty$  or $(t_i)$ ends with
 $((\eps_n\overline{\eps_n})^\infty,(\overline{\eps_n}\eps_n)^\infty)$ for $n\ge1$.
\end{lemma}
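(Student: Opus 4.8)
The plan is to mimic, step for step, the proof of Lemma~\ref{16}, replacing Lemma~\ref{9} by Lemma~\ref{10}; the only genuinely new issue is that Lemma~\ref{10} allows a unique expansion to fail to be eventually periodic, and that ``mixed'' alternative has to be excluded. First, exactly as in Lemma~\ref{16}, since $(t_i)\in\Omega_2^{\mathbb N}$ the sequence $(t_i)$ contains neither $(1,1)$ nor $(-1,-1)$. Assume neither $(t_{i,1})$ nor $(t_{i,2})$ ends with $0^\infty$. By Lemma~\ref{10}, each of $(t_{i,1}),(t_{i,2})$ is then either eventually periodic with period $(\eps_k\overline{\eps_k})^\infty$ for some $k\ge1$ — the case $k=0$ being impossible because $(\eps_0\overline{\eps_0})^\infty=(1(-1))^\infty$ contains no $0$, which would leave $(t_i)$ with only finitely many $(0,0)$ (cf.\ the Remark after Lemma~\ref{16}) — or it ends with a sequence of the mixed form $(\eps_0\overline{\eps_0})^{j_0}(\eps_0\overline{\eps_1})^{l_0}(\eps_1\overline{\eps_1})^{j_1}(\eps_1\overline{\eps_2})^{l_1}\cdots$ or with its reflection. \emph{Once} the mixed alternative is ruled out, the proof closes verbatim as for Lemma~\ref{16}: both coordinates are eventually periodic with periods $(\eps_\alpha\overline{\eps_\alpha})^\infty$, $(\eps_\beta\overline{\eps_\beta})^\infty$, $\alpha,\beta\ge1$; Lemma~\ref{15} forces $\alpha=\beta=:n$ (else $E_{n,m}^i\notin\Omega_2^{\mathbb N}$ for every admissible $i$), so $(t_i)$ ends with $(\sigma^i((\eps_n\overline{\eps_n})^\infty),(\eps_n\overline{\eps_n})^\infty)$ for some $0<i<2^{n+1}$; and since $(t_i)$ has infinitely many $(0,0)$, Lemma~\ref{12} forces $i=2^n$, giving that $(t_i)$ ends with $((\overline{\eps_n}\eps_n)^\infty,(\eps_n\overline{\eps_n})^\infty)$, equivalently with $((\eps_n\overline{\eps_n})^\infty,(\overline{\eps_n}\eps_n)^\infty)$, with $n\ge1$ unrestricted.

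The whole task is thus to exclude the mixed alternative. Say $(t_{i,2})$ ends with a mixed sequence (the case of $(t_{i,1})$ being symmetric); replacing $(t_i)$ by $-(t_i)$ — which preserves $t\in T$, $\Omega_2$-matchedness and the set of positions carrying $(0,0)$, since $\Omega_2=-\Omega_2$ — we may assume $(t_{i,2})$ ends with $M:=(\eps_0\overline{\eps_0})^{j_0}(\eps_0\overline{\eps_1})^{l_0}(\eps_1\overline{\eps_1})^{j_1}\cdots$ and that $M$ is not eventually periodic. Then $M$ has infinitely many nonzero $j_m$ or $l_m$ (otherwise it would be a finite word), in particular its block levels tend to $\infty$, and for arbitrarily large $k$ one has $j_{k+1}\ge1$ or $l_{k+1}=1$; in either case the word $\eps_{k+1}\overline{\eps_{k+1}}=\eps_k\overline{\eps_k}^+\overline{\eps_k}\eps_k^-$ occurs in $M$ (directly as a block $\eps_{k+1}\overline{\eps_{k+1}}$, or as the length-$2^{k+2}$ prefix of a block $\eps_{k+1}\overline{\eps_{k+2}}=\eps_{k+1}\overline{\eps_{k+1}}\eps_{k+1}^-$), and such occurrences march to infinity.

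Fix one such occurrence, at a position $p$ so far out that both coordinates have already entered their structured tails there, and look at the window $W=[p,p+2^{k+2})$. By Lemma~\ref{10}, $(t_{i,1})$ on $W$ is either a factor of $(\eps_\alpha\overline{\eps_\alpha})^\infty$ for some $\alpha\ge1$, or a factor of a mixed sequence. In the first case, with $k\ge\alpha+1$, the $(t_{i,1})$-part of $W$ is $\sigma^{i_0}((\eps_\alpha\overline{\eps_\alpha})^\infty)$ truncated, $0\le i_0<2^{\alpha+1}$, while its $(t_{i,2})$-part begins with $\eps_\alpha\overline{\eps_\alpha}^+\overline{\eps_\alpha}\eps_\alpha$ (the length-$2^{\alpha+2}$ prefix of $\eps_{k+1}\overline{\eps_{k+1}}$, since $\eps_{k+1}$ begins with $\eps_{\alpha+2}=\eps_\alpha\overline{\eps_\alpha}^+\overline{\eps_\alpha}\eps_\alpha$, using $\overline{\eps_{\alpha+1}}=\overline{\eps_\alpha}\eps_\alpha^-$). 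For $\alpha\ge3$ and $i_0\ne0$, Lemma~\ref{20} together with its Remark hands us a position inside $W$ where the pair lies in $\{(1,1),(-1,-1)\}$; for $i_0=0$, the pair already at $p$ is $(1,1)$ because $\eps_\alpha$ begins with $1$; and $\alpha\in\{1,2\}$ is settled by a direct check, if necessary enlarging $W$ by the next block of $M$. Either way $(t_i)\notin\Omega_2^{\mathbb N}$, a contradiction.

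I expect the hard part to be the remaining sub-case, where $(t_{i,1})$ is \emph{also} of mixed form, since then the two coordinates carry unsynchronized block structures. The proposal there is to fix a large common level $k$, show that the offset between the level-$k$ block grids of $(t_{i,1})$ and $(t_{i,2})$ is bounded, and deduce that at a far-out occurrence of $\eps_{k+1}\overline{\eps_{k+1}}$ in $(t_{i,2})$ the coordinate $(t_{i,1})$ restricts, on a window of bounded length, either to a factor of some $(\eps_{k'}\overline{\eps_{k'}})^\infty$ with $k'\le k+1$ — which returns us to the computation just made — or to a factor still carrying a $\pm$-perturbed $\eps_{k'}$-block; in all cases Lemma~\ref{20} or a direct check produces a forbidden $(1,1)$ or $(-1,-1)$. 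Pinning down this relative alignment is the delicate point; it amounts to a more careful version of the block bookkeeping already present in the proofs of Lemmas~\ref{12} and~\ref{20}, and once it is in hand the lemma follows.
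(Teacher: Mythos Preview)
Your overall architecture matches the paper's: split off the case where one coordinate ends in $0^\infty$, discard the $(\eps_0\overline{\eps_0})^\infty$ tail via the density remark, and in Case II (neither coordinate eventually $0^\infty$) separate the three sub-cases periodic/periodic, periodic/mixed, mixed/mixed. Your periodic/periodic closure is exactly the paper's (Lemmas~\ref{15} and~\ref{12}), and your periodic/mixed argument is the same idea as the paper's Case II(i): locate an occurrence of $\eps_{\alpha+2}=\eps_\alpha\overline{\eps_\alpha}^+\overline{\eps_\alpha}\eps_\alpha$ far out in the mixed coordinate and invoke Lemma~\ref{20} (with its Remark) against the $(\eps_\alpha\overline{\eps_\alpha})^\infty$ tail; your extra care with $i_0=0$ and $\alpha\in\{1,2\}$ is fine.

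The genuine gap is your mixed/mixed sub-case. You explicitly write ``I expect the hard part to be\ldots'' and then outline a plan about bounding the offset between the two level-$k$ block grids and reducing to Lemma~\ref{20}, but you do not carry it out. As stated, nothing prevents the two mixed tails from having block boundaries that drift relative to one another, and your sketch does not supply the mechanism that forces a $(1,1)$ or $(-1,-1)$. The paper does execute this step, and the device is concrete: for a large level $n$, both mixed tails eventually decompose as concatenations of the four length-$2^{n+2}$ blocks
\[
B_1=\eps_n\overline{\eps_n}^{+}\overline{\eps_n}\eps_n^{-},\quad
B_2=\eps_n\overline{\eps_n}^{+}\overline{\eps_n}\eps_n,\quad
B_3=\overline{\eps_n}\eps_n^{-}\eps_n\overline{\eps_n}^{+},\quad
B_4=\overline{\eps_n}\eps_n^{-}\eps_n\overline{\eps_n}.
\]
One then strips the even positions (exactly as in the proof of Lemma~\ref{12}), so that $B_1,B_2$ become $C_1=A_{n-1}\overline{A_{n-1}}\,\overline{A_{n-1}}A_{n-1}$ and $B_3,B_4$ become $C_2=\overline{A_{n-1}}A_{n-1}A_{n-1}\overline{A_{n-1}}$. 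The problem reduces to showing that $(\sigma^i(C_aC_b),C_cC_d)$ contains a $(\pm1,\pm1)$ for every shift $0<i<2^{n+1}$ and every choice of $a,b,c,d\in\{1,2\}$; this is a finite case analysis in the spirit of the induction in Lemma~\ref{12} (the paper treats $(\sigma^i(C_1C_2),C_1C_2)$ in detail with a figure and declares the remaining fifteen cases similar). That reduction to $C_1,C_2$ and the ensuing case check is exactly the ``relative alignment'' bookkeeping you allude to; without it your proof of the lemma is incomplete.
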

\begin{proof}
Let $q=q_{KL}$ and $t\in T$ with $q$-expansion $(t_i)=((t_{i,1}), (t_{i,2}))$.
Suppose that  $(t_{i,1}), (t_{i,2})\notin \set{(-1)^\infty, 1^\infty}$ and $(t_i)$ contains infinitely many $(0,0)$. We distinguish two cases.

Case I. $(t_{i,1})$ or $(t_{i,2})$ ends with $0^\infty$. We assume that $(t_{i,1})$ ends with $0^\infty$. Then by Lemma \ref{10} $(t_{i,2})$ ends with one of
\begin{equation*}
0^\infty, (\eps_1\overline{\eps_1})^\infty,(\eps_2\overline{\eps_2})^\infty,\cdots,
\end{equation*}
or ends with the sequence of the form
\begin{equation}\label{e26}
(\eps_{0}\overline{\eps_{0}})^{j_0}(\eps_{0}\overline{\eps_{1}})^{l_0}(\eps_{1}\overline{\eps_{1}})^{j_1}(\eps_{1}\overline{\eps_{2}})^{l_1}\cdots
\end{equation}
and its reflection, where
\begin{equation*}
l_m\in\set{0,1}\quad\text{and}\quad 0\le j_m<\infty \quad\text{for all $m\ge0$}.
\end{equation*}

Case II. $(t_{i,1})$ and $(t_{i,2})$ do not end with $0^\infty$. By Lemmas \ref{10}, \ref{12} and \ref{15} it remains to consider the cases that $(t_{i,1})$ ends with one of
\begin{equation*}
(\eps_1\overline{\eps_1})^\infty,(\eps_2\overline{\eps_2})^\infty,\cdots,
\end{equation*}
and $(t_{i,2})$ ends with the sequence having the form of \eqref{e26}, and both $(t_{i,1}),(t_{i,2})$ end with the sequence having the form of \eqref{e26}.

II (i). $(t_{i,1})$ ends with $(\eps_n\overline{\eps_n})^\infty$ for some $n\ge1$ and $(t_{i,2})$ ends with the sequence of the form of \eqref{e26}.

Note that $\eps_{n+2}=\eps_n\overline{\eps_n}^+\overline{\eps_n}\eps_n$ appears infinitely often in $(t_{i,2})$, because $\eps_{m}$ begins with $\eps_{n+2}$ for all $m\ge n+2$.
It follows from Lemma \ref{20} and the consequent remark that $(t_{i,1})$ and $(t_{i,2})$ are not $\Omega_2$-matched, which leads to contradiction.

II (ii). Both $(t_{i,1})$ and $(t_{i,2})$ end with the sequence of the form of \eqref{e26}. We choose $k_1,k_2$ large enough such that $(t_{i+k_1,1})$ and $(t_{i+k_2,2})$ begin with one of the following blocks for some large positive integer $n$
\begin{equation}\label{e32}
B_1=\eps_n\overline{\eps_n}^+\overline{\eps_n}\eps_n^-, B_2=\eps_n\overline{\eps_n}^+\overline{\eps_n}\eps_n, B_3=\overline{\eps_n}\eps_n^-\eps_n\overline{\eps_n}^+, B_4=\overline{\eps_n}\eps_n^-\eps_n\overline{\eps_n}.
\end{equation}
Then next block of length $2^{n+2}$ is one of \eqref{e32}, i.e., $(t_{i+k_1,1})$ and $(t_{i+k_2,2})$ start at
\begin{equation}\label{e28}
B_{i_1}B_{i_2}B_{i_3}\cdots
\end{equation}
where $i_n\in\set{1,2,3,4}$ for all $n\ge1$. We take the same method as in the proof of Lemma \ref{12}, removing the even terms in $B_{1}, B_{2}, B_{3}, B_{4}$, and denote by $C_1=A_{n-1}\overline{A_{n-1}}\overline{A_{n-1}}A_{n-1}, C_2=\overline{A_{n-1}}A_{n-1}A_{n-1}\overline{A_{n-1}}$ the new blocks. It follows from \eqref{e28} that there are $C_1C_1, C_1C_2, C_2C_1, C_2C_2$ four cases. Next we consider the case $(\sigma^i(C_1C_2),C_1C_2)$ for $0<i<2^{n+1}$ (see Figures 1 and 6, let $k=n-1$ in Figure 1), for the cases $i=2^{n-1},2^n,2^{n-1}+2^n$,
       one can check directly by Figure 6. The remaining fifteen cases are similar. Hence $(t_{i,1})$ and $(t_{i,2})$ are not $\Omega_2$-matched, which leads to the same contradiction.
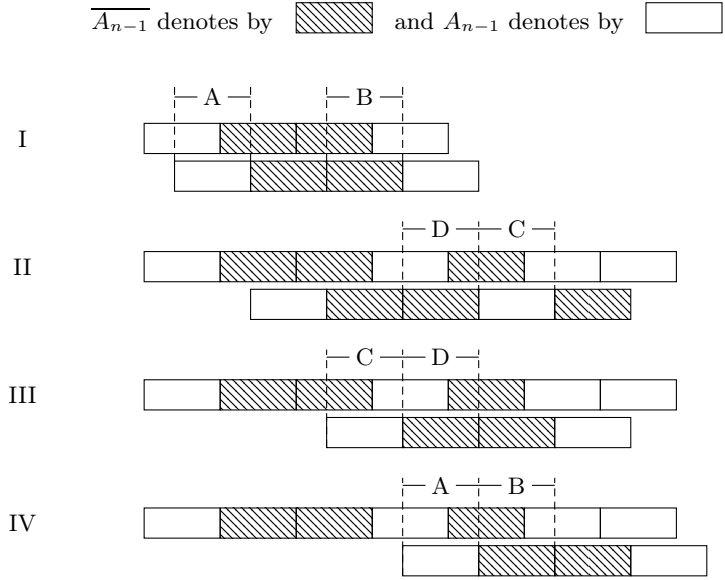
\begin{figure}[h]\label{figure6}
\centering
\begin{tikzpicture}[scale=5]
\draw[pattern=north west lines] (0,.62) rectangle (.2,.7);
\node [label={[xshift=-1.5cm, yshift=2.8cm]\scriptsize $\overline{A_{n-1}}$ denotes by}] {};
\draw(.92,.62) rectangle (1.12,.7);
\node [label={[xshift=2.8cm, yshift=2.8cm]\scriptsize and $A_{n-1}$ denotes by}] {};
\draw (-.4,.3) rectangle (-.2,.38);
\draw [pattern=north west lines] (-.2,.3) rectangle (0,.38);
\draw [pattern=north west lines] (0,.3) rectangle (.2,.38);
\draw (.2,.3) rectangle (.4,.38);

\draw (-.32,.2) rectangle (-.12,.28);
\draw [pattern=north west lines] (-.12,.2) rectangle (.08,.28);
\draw [pattern=north west lines](.08,.2) rectangle (.28,.28);
\draw (.28,.2) rectangle (.48,.28);

\draw [densely dashed](-.32,.28)--(-.32,.48);
\draw [densely dashed](-.12,.28)--(-.12,.48);
\draw [densely dashed](.08,.28)--(.08,.48);
\draw [densely dashed](.28,.28)--(.28,.48);
\draw (-.32,.45)--(-.26,.45);
\draw (-.18,.45)--(-.12,.45);
\draw(-.22,.45)node{\scriptsize A};
\draw (.08,.45)--(.14,.45);
\draw (.22,.45)--(.28,.45);
\draw(.18,.45)node{\scriptsize B};

\draw (-.4,-.04) rectangle (-.2,.04);
\draw [pattern=north west lines] (-.2,-.04) rectangle (0,.04);
\draw [pattern=north west lines] (0,-.04) rectangle (.2,.04);
\draw (.2,-.04) rectangle (.4,.04);
\draw [pattern=north west lines](.4,-.04) rectangle (.6,.04);
\draw  (.6,-.04) rectangle (.8,.04);
\draw  (.8,-.04) rectangle (1,.04);

\draw (-.12,-.06) rectangle (.08,-.14);
\draw [pattern=north west lines] (.08,-.06) rectangle (.28,-.14);
\draw [pattern=north west lines] (.28,-.06) rectangle (.48,-.14);
\draw (.48,-.06) rectangle (.68,-.14);
\draw [pattern=north west lines](.68,-.06) rectangle (.88,-.14);

\draw [densely dashed](.28,-.14)--(.28,.12);
\draw [densely dashed](.48,-.14)--(.48,.12);
\draw [densely dashed](.68,-.14)--(.68,.12);

\draw (.28,.1)--(.34,.1);
\draw (.48,.1)--(.42,.1);
\draw(.38,.1)node{\scriptsize D};
\draw (.48,.1)--(.54,.1);
\draw (.62,.1)--(.68,.1);
\draw(.58,.1)node{\scriptsize C};

\draw (-.4,-.38) rectangle (-.2,-.3);
\draw [pattern=north west lines] (-.2,-.38) rectangle (0,-.3);
\draw [pattern=north west lines] (0,-.38) rectangle (.2,-.3);
\draw (.2,-.38) rectangle (.4,-.3);
\draw [pattern=north west lines](.4,-.38) rectangle (.6,-.3);
\draw (.6,-.38) rectangle (.8,-.3);
\draw (.8,-.38) rectangle (1,-.3);

\draw (.08,-.48) rectangle (.28,-.4);
\draw [pattern=north west lines] (.28,-.48) rectangle (.48,-.4);
\draw [pattern=north west lines] (.48,-.48) rectangle (.68,-.4);
\draw (.68,-.48) rectangle (.88,-.4);

\draw [densely dashed](.08,-.48)--(.08,-.22);
\draw [densely dashed](.28,-.48)--(.28,-.22);
\draw [densely dashed](.48,-.48)--(.48,-.22);

\draw (.08,-.24)--(.14,-.24);
\draw (.22,-.24)--(.28,-.24);
\draw(.18,-.24)node{\scriptsize C};
\draw (.28,-.24)--(.34,-.24);
\draw (.42,-.24)--(.48,-.24);
\draw(.38,-.24)node{\scriptsize D};

\draw (-.4,-.72) rectangle (-.2,-.64);
\draw [pattern=north west lines] (-.2,-.72) rectangle (0,-.64);
\draw [pattern=north west lines] (0,-.72) rectangle (.2,-.64);
\draw (.2,-.72) rectangle (.4,-.64);
\draw [pattern=north west lines](.4,-.72) rectangle (.6,-.64);
\draw  (.6,-.72) rectangle (.8,-.64);
\draw  (.8,-.72) rectangle (1,-.64);

\draw (.28,-.82) rectangle (.48,-.74);
\draw [pattern=north west lines] (.48,-.82) rectangle (.68,-.74);
\draw [pattern=north west lines] (.68,-.82) rectangle (.88,-.74);
\draw (.88,-.82) rectangle (1.08,-.74);

\draw [densely dashed](.48,-.82)--(.48,-.56);
\draw [densely dashed](.68,-.82)--(.68,-.56);
\draw [densely dashed](.28,-.82)--(.28,-.56);

\draw (.48,-.58)--(.54,-.58);
\draw (.62,-.58)--(.68,-.58);
\draw(.58,-.58)node{\scriptsize B};
\draw (.28,-.58)--(.34,-.58);
\draw (.42,-.58)--(.48,-.58);
\draw(.38,-.58)node{\scriptsize A};

\node [label={[xshift=-3.6cm, yshift=1.3cm]\scriptsize I}] {};
\node [label={[xshift=-3.6cm, yshift=-.4cm]\scriptsize II}] {};
\node [label={[xshift=-3.6cm, yshift=-2.1cm]\scriptsize III}] {};
\node [label={[xshift=-3.6cm, yshift=-3.8cm]\scriptsize IV}] {};
\end{tikzpicture}\caption{ I: $0<i\le2^{n-1}$; II: $2^{n-1}<i\le2^{n}$; III: $2^{n}<i\le2^{n-1}+2^n$ and IV: $2^{n-1}+2^n<i<2^{n+1}$.}
\end{figure}
\end{proof}

The following result was proved in \cite[p. 2829]{DLD} (see also \cite[Lemma 3.5]{BD}).
\begin{lemma}\label{5}
Let $q\in(q_{KL},3)$, then there exists $n\in\mathbb N$ such that $\uu_q'$ contains the subshift of
fnite type over the alphabet $\mathcal A=\set{a_n,b_n,\overline{a_n},\overline{b_n}}$ with transition matrix
\begin{equation*}
  A=\left (
  \begin{array}{llll}
  0 & 1 & 1 & 0   \\
  0 & 0 & 1 & 0  \\
1 & 0 & 0 & 1  \\
  1 & 0 & 0& 0  \\
  \end{array}
  \right ),
\end{equation*}
where $a_n=0\lambda_1\cdots \lambda_{2^n-1}$ and $b_n=(-1)\lambda_1\cdots \lambda_{2^n-1}$.
\end{lemma}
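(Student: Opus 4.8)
The plan is to re-derive the statement from the lexicographic characterisation of $\uu_q'$, which is essentially the argument of \cite[p. 2829]{DLD}. First I would invoke the Parry-type description of $\uu_q'$ (see \cite{VKL,KK}): there is a sequence $\alpha(q)=(\alpha_i)\in\Omega_3^{\mathbb N}$, namely the quasi-greedy expansion of $\frac{q-2}{q-1}$ with respect to $\Omega_3$ (equivalently, of $1$ in the alphabet $\set{0,1,2}$), such that $(c_i)\in\Omega_3^{\mathbb N}$ belongs to $\uu_q'$ if and only if, for every $k\ge1$, one has $(c_{k+1}c_{k+2}\cdots)\prec\alpha(q)$ whenever $c_k\ne1$ and $(-c_{k+1})(-c_{k+2})\cdots\prec\alpha(q)$ whenever $c_k\ne-1$, where $\prec$ is the lexicographic order. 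Then I would use the definition of the Komornik--Loreti constant together with the monotonicity of $q\mapsto\alpha(q)$ to record that $q>q_{KL}$ forces $\alpha(q)\succ(\lam_i)_{i\ge1}$ strictly; let $p=p(q)\ge1$ be the first index with $\alpha_p>\lam_p$, so that $\alpha_i=\lam_i$ for all $i<p$.

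Next, for a large odd integer $n$ to be fixed at the end, I would introduce the four words of length $2^n$, $a_n=0\lam_1\cdots\lam_{2^n-1}$ and $b_n=(-1)\lam_1\cdots\lam_{2^n-1}$, together with their reflections $\overline{a_n},\overline{b_n}$ with respect to $\Omega_3$. Thus $a_n$ and $\overline{a_n}$ begin with $0$, $b_n$ begins with $-1$, $\overline{b_n}$ begins with $1$; the words $a_n,b_n$ share the common suffix $\lam_1\cdots\lam_{2^n-1}$, which is the prefix of $(\lam_i)$ of length $2^n-1$, and $\overline{a_n},\overline{b_n}$ share its reflection; and, using \eqref{13}, (P2)--(P4) and the parity of $n$, every two-symbol word occurring at a junction of two consecutive blocks permitted by $A$ is a factor of $(\lam_i)$. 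Let $\Sigma_n\subseteq\Omega_3^{\mathbb N}$ be the set of all concatenations of blocks from $\mathcal A=\set{a_n,b_n,\overline{a_n},\overline{b_n}}$ that respect the transition matrix $A$. Since $A$ has zero diagonal, $\Sigma_n$ contains neither $1^\infty$ nor $(-1)^\infty$, so to prove the lemma it suffices to verify the two displayed inequalities for every $(c_i)\in\Sigma_n$ and every $k\ge1$.

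This is a finite case analysis according to the residue of $k+1$ modulo $2^n$ (the position of $c_{k+1}$ inside its block) and to the admissible triples of consecutive blocks of $(c_i)$. The key point I would establish is that, up to one ``boundary'' symbol at its front, every tail $(c_{k+1}c_{k+2}\cdots)$ arising in $\Sigma_n$ is a window of $(\lam_i)$ or of $\overline{(\lam_i)}$, and that -- by \eqref{13}, (P1)--(P4) and the fact that $(\lam_i)$ contains neither the factor $11$ nor the factor $(-1)(-1)$ -- the matrix $A$ is arranged so that such a tail $\tau$ either becomes lexicographically strictly smaller than $(\lam_i)$ at its first disagreement with $(\lam_i)$, in which case $\tau\prec(\lam_i)\preceq\alpha(q)$, or else it agrees with $(\lam_i)$ on a prefix of length at least $c\,2^n$ for an absolute constant $c>0$; by the symmetry of $A$ and of $(\lam_i)$ under reflection the same holds for the reflected tails $(-c_{k+1})(-c_{k+2})\cdots$. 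Choosing $n$ so large that $c\,2^n\ge p(q)$, in the second alternative $\tau_{p(q)}=\lam_{p(q)}<\alpha_{p(q)}$, so again $\tau\prec\alpha(q)$. Hence $\Sigma_n\subseteq\uu_q'$, which is the assertion.

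The main obstacle is the combinatorial step in the previous paragraph: one must show that $A$ is at the same time restrictive enough that no admissible tail overtakes $(\lam_i)$ before position $c\,2^n$, and loose enough that $\Sigma_n$ still carries positive entropy. In fact $A$ has characteristic polynomial $(x^2+x+1)(x^2-x-1)$, hence spectral radius $(1+\sqrt5)/2$, so $\set{\sum_{i\ge1}c_i/q^i:(c_i)\in\Sigma_n}$ is a subset of $\uu_q$ of Hausdorff dimension $\frac{\log((1+\sqrt5)/2)}{2^n\log q}>0$; this positivity is exactly what the lemma is needed for in Theorem \ref{2}(iii). The verification itself is of the same nature as the arguments behind Lemmas \ref{12} and \ref{20} and Figures 1 and 3--6 above -- tracking, via \eqref{13} and (P1)--(P4), how shifts of the four blocks align with $(\lam_i)$ -- and it is carried out in detail in \cite[p. 2829]{DLD}; see also \cite[Lemma 3.5]{BD}.
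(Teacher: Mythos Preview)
The paper does not prove this lemma at all: it is stated with the preface ``The following result was proved in \cite[p.~2829]{DLD} (see also \cite[Lemma 3.5]{BD})'' and no argument is given. Your proposal is precisely a sketch of that cited argument --- the Parry-type lexicographic characterisation of $\uu_q'$, the choice of $n$ large relative to the first index $p(q)$ at which $\alpha(q)$ exceeds $(\lam_i)$, and the block-by-block verification that admissible tails stay below $\alpha(q)$ --- so in approach you and the paper are aligned: both defer to \cite{DLD,BD}.

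Your outline is sound, but be aware that you have not actually closed the argument either. The sentence ``every tail $(c_{k+1}c_{k+2}\cdots)$ arising in $\Sigma_n$ is a window of $(\lam_i)$ or of $\overline{(\lam_i)}$, up to one boundary symbol'' is exactly the combinatorial heart of the proof, and it is not automatic: one must check, transition by transition, that the junction symbols (e.g.\ $\lam_{2^n-1}$ followed by $0$ or $-1$, and $-\lam_{2^n-1}$ followed by $0$ or $1$) are compatible with the self-similar structure of $(\lam_i)$ coming from \eqref{13}. You acknowledge this and again point to \cite{DLD}, which is fine, but then your proposal is really a citation with commentary rather than an independent proof. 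Two minor points: your insistence that $n$ be odd is not part of the stated lemma and should be justified if you keep it (it governs the value of $\lam_{2^n-1}$ via (P2), hence which junctions are admissible); and the remark on the spectral radius, while correct and relevant to Theorem~\ref{2}(iii), is not needed for the inclusion $\Sigma_n\subseteq\uu_q'$ itself.
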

For reader's convenience, we restate our result.
\begin{2}\mbox{}
\begin{enumerate}[\upshape (i)]
\item If $q\in(q_m,q_{m+1}]$ for some $m\ge1$, then
\begin{equation*}
D_q=\set{0,\frac{\log3}{\log q}}\bigcup\set{-\frac{\log3}{\log q}\sum_{i=1}^{n}\left(-\frac{1}{2}\right)^i:1\le n<m}.
\end{equation*}
\item If $q=q_{KL}$ then
\begin{align*}
 D_{q_{KL}}&=\set{0,\frac{\log3}{\log q_{KL}},\frac{\log3}{3\log q_{KL}}}\\
 &\bigcup\set{-\frac{\log3}{\log q_{KL}}\sum_{i=1}^{n}
 \left(-\frac{1}{2}\right)^i:1\le n<\infty}.
\end{align*}

\item If $q\in(q_{KL},3)$, $D_q$ contains an interval.
\end{enumerate}
\end{2}
\begin{proof}[Proof of Theorem \ref{2}]
Let $t\in T$ with $q$-expansion $(t_i)=((t_{i,1}), (t_{i,2}))$. If $(t_{i,1})$ or $(t_{i,2})$ is in $\set{(-1)^\infty,1^\infty}$, then one
gets that $0\in D_{\alpha}$, so we assume that $(t_{i,1})$ and $(t_{i,2})$ are not in $\set{(-1)^\infty,1^\infty}$ and $(t_i)$ contains infinitely many $(0,0)$.

(i) Let $q\in(q_m,q_{m+1}]$ for some $m\ge1$.
 By Lemma \ref{9}, $(t_{i,1})$ and $(t_{i,2})$ is eventually periodic. It follows from Lemma \ref{15}, $(t_i)=((t_{i,1}), (t_{i,2}))$ is a sequence of form
\begin{equation*}
(\omega_1(\eps_n\overline{\eps_n})^\infty,\omega_2(\eps_n\overline{\eps_n})^\infty)
\end{equation*}
or
\begin{equation*}
(\omega_1(\eps_n\overline{\eps_n})^\infty,\omega_20^\infty)\qtq{or}(\omega_10^\infty,\omega_20^\infty)
\end{equation*}
for some $1\le n<m$. As mentioned in the preceding remark we exclude the situation $n=0$. Then from Lemma \ref{16} we infer that $(t_i)$ ends with
\begin{equation*}
((\eps_n\overline{\eps_n})^\infty,(\overline{\eps_n}\eps_n)^\infty),
\end{equation*}
or
\begin{equation*}
((\eps_n\overline{\eps_n})^\infty,0^\infty)\qtq{or}(0^\infty,0^\infty)
\end{equation*}
for some $1\le n<m$.  Therefore applying Lemmas \ref{17} and \ref{7} we conclude that
\begin{align*}
&\dim _H(E\cap (E+t))\\
=&
\begin{cases}
-\frac{(\log3)[\sum_{i=1}^n(-1/2)^i]}{\log q}&\text{if $(t_i)$ ends with $((\eps_n\overline{\eps_n})^\infty,(\overline{\eps_n}\eps_n)^\infty)$,}\\
-\frac{(\log3)[\sum_{i=1}^n(-1/2)^i]}{\log q}&\text{if $(t_i)$ ends with $((\eps_n\overline{\eps_n})^\infty,0^\infty)$,}\\
\quad\log3/\log q&\text{if $(t_i)$ ends with $(0^\infty,0^\infty)$.}
\end{cases}
\end{align*}

(ii) Thanks to Lemma \ref{l35} and (i), it suffices to prove the case $(t_{i,1})$ ends with $0^\infty$ and $(t_{i,2})$ ends with the sequence of the form
\begin{equation}\label{e33}
(\eps_{0}\overline{\eps_{0}})^{j_0}(\eps_{0}\overline{\eps_{1}})^{l_0}(\eps_{1}\overline{\eps_{1}})^{j_1}(\eps_{1}\overline{\eps_{2}})^{l_1}\cdots
\end{equation}
 where
\begin{equation*}
l_m\in\set{0,1}\quad\text{and}\quad 0\le j_m<\infty \quad\text{for all $m\ge0$}.
\end{equation*}
We claim that the density of $0$ in the sequence of the form \eqref{e33} is $1/3$. We arbitrarily take a sequence $(t_{i,2})$ of the form \eqref{e33}. Given a  positive integer $n$ arbitrarily, by the structure of $\eps_n $ (see (P1))
 one can get there exists $k=k(n)$ such that
$$
\sigma ^k(t_{i,2})=B_{i_1}B_{i_2}B_{i_3}\cdots
$$
where $i_\ell \in \{1,2,3,4\}$ and $B_{i_\ell}$ is given by \eqref{e32}.

%
%

On the other hand, an application of (P2) and Lemma \ref{7} yields that when $n$ is odd
\begin{align*}
&d^*(B_1^\infty)=d^*(B_3^\infty)=-\sum_{i=1}^n\left (-\frac{1}{2}\right )^i-\frac{1}{2^{n+1}}=\frac{1}{3}-\frac{1}{3\cdot 2^{n+1}},\\
&d^*(B_2^\infty)=d^*(B_4^\infty)=-\sum_{i=1}^n\left (-\frac{1}{2}\right )^i-\frac{1}{2^{n+2}}=\frac{1}{3}+\frac{1}{3\cdot 2^{n+2}},
\end{align*}
and when $n$ is even
\begin{align*}
&d^*(B_1^\infty)=d^*(B_3^\infty)=-\sum_{i=1}^n\left (-\frac{1}{2}\right )^i+\frac{1}{2^{n+1}}=\frac{1}{3}+\frac{1}{3\cdot 2^{n+1}},\\
&d^*(B_2^\infty)=d^*(B_4^\infty)=-\sum_{i=1}^n\left (-\frac{1}{2}\right )^i+\frac{1}{2^{n+2}}=\frac{1}{3}-\frac{1}{3\cdot 2^{n+2}}.
\end{align*}
Note that $d^*(t_{i,2})=d^*(\sigma ^k(t_{i,2}))$. By above considerations one have
$$
\left |d^*(t_{i,2})-\frac{1}{3}\right |=\left |d^*(\sigma ^k(t_{i,2}))-\frac{1}{3}\right |<\frac{1}{3\cdot 2^{n+1}}.
$$
Since $n$ can be taken arbitrarily, from which the claim follows. So for this case we have $\dim _H(E\cap (E+t))=\frac{\log 3}{3\log q_{KL}}$ by Lemma \ref{17}.

(iii) Let $q\in(q_{KL},3)$. Let $n$ be as in Lemma \ref{5}, given two words $u_1=(b_n \overline{a_n b_n} a_n,\overline{b_n} a_n b_n \overline{a_n})$ and $u_2=(\overline{a_n}a_n, a_n\overline{a_n})$, then $d(u_1)<d(u_2)$ can be verified directly. Furthermore for any $d\in[d(u_1),d(u_2)]$, there exists a sequence $(c_i)$ such that $(\eta_i)=u_1^{c_1}u_2^{c_2}\cdots$ satisfies $d((\eta_i))=d$. Hence $D_q$ contains $[\frac{\log 3}{\log q}d(u_1),\frac{\log 3}{\log q}d(u_2)]$ by Lemma \ref{17}. 
\end{proof}

\section*{Acknowledgment}
This work has been done during the first author's visit of the
Department of Mathematics of the University of Strasbourg and was
supported by the China Scholarship Council (No. 201806140142) .
He thanks the members of the department for their hospitality. Li was  supported by NSFC No.~11671147  and Science and Technology Commission of Shanghai Municipality (STCSM)  No.~18dz2271000.


\begin{thebibliography}{999}\label{references}
\bibitem{BD}
S. Baker, D. Kong,
\emph{Unique expansions and intersections
of Cantor sets},
Nonlinearity 30 (2017), 1497--1512.


\bibitem{BMS}
D. Broomhead, J. Montaldi, and N. Sidorov,
\emph{Golden gaskets: variations on the Sierpi\'nski sieve},
Nonlinearity 17(4) (2004), 1455--1480.





\bibitem{VKL}
M. de Vries, V. Komornik, P. Loreti,
\emph{Topology of the set of univoque bases},
Topology and its Applications 205 (2016), 117--137.



\bibitem{KF}
K. Falconer,
\emph{Fractal geometry},
John Wiley \& Sons Ltd., Chichester, 1990. Mathematical foundations and
applications.


\bibitem{H}
S. Hua,
\emph{On the Hausdorff dimension of generalized self-similar sets},
Acta Mathematical Applicatac Sinica (in chinese) 17 (1994), 551--558.


\bibitem{HL}
S. Hua, W. Li,
\emph{Packing Dimension Of Generalized Moran Sets},
Progress In Natural Science 2 (1996), 148--152.

\bibitem{KLP-2011}
V. Komornik, A.C. Lai, M. Pedicini,
\emph{Generalized golden ratios of ternary alphabets},
J. Eur. Math. Soc. 13 (2011), 4, 1113--1146.

\bibitem{K-2012}
V. Komornik, P. Loreti,
\emph{Unique infinite expansions in noninteger bases},
Acta Math. Hungar. 134 (2012), 3, 344--355.

\bibitem{KL}
V. Komornik, P. Loreti,
\emph{Unique developments in non-integer bases},
Amer. Math. Monthly 105 (1998), 636--639.

\bibitem{KL2}
V. Komornik, P. Loreti,
\emph{Subexpansions, superexpansions and uniqueness properties in
non-integer bases},
Period. Math. Hungar. 44 (2) (2002), 197--218.

\bibitem{KK}
V. Komornik, D. Kong,
\emph{Bases in which some numbers have exactly two expansions},
J. Number Theory 195 (2019), 226--268.

\bibitem{DL}
D. Kong, W. Li,
\emph{ Critical base for the unique codings of fat sierpinski gasket},
accepted by nonlinearity (arxiv:1812.00585v1).

\bibitem{DLD}
D. Kong, W. Li, M. Dekking,
\emph{Intersections of homogeneous Cantor sets and beta-expansions},
Nonlinearity 23 (2010), 2815--2834.


\bibitem{WLDX}
W. Li, D. Xiao,
\emph{On the Intersection of Translation of Middle-$\alpha$ Cantor Sets},
In Fractals and
Beyond, World Scientific, Valletta 1998, 137--148.


\bibitem{R}
 A. R\'enyi,
\emph{Representations for real numbers and their ergodic properties},
Acta Math. Acad. Sci. Hung. 8 (1957) 477--493.

\bibitem{NS}
N. Sidorov,
\emph{Combinatorics of linear iterated function systems with overlaps},
Nonlinearity 20(5) (2007), 1299--1312.

\end{thebibliography}
 \end{document}